\newtheorem{theorem}{Theorem}[section]
\newtheorem{lem}[theorem]{Lemma}
\newtheorem{proposition}[theorem]{Proposition}
\newtheorem{remark}[theorem]{Remark}
\theoremstyle{definition}
\newtheorem{definition}[theorem]{Definition}
\newcommand{\g}{{G}}
\newcommand{\R}{{\bf R}}
\def\cleardoublepage{\clearpage\if@twoside \ifodd\c@page\else
\hbox{}
\thispagestyle{empty}
\newpage
\if@twocolumn\hbox{}\newpage\fi\fi\fi}
\title{\g-convergence of elliptic and parabolic operators depending on vector fields}
\author{A. Maione}
\address{Alberto Maione: Abteilung f{\"u}r Angewandte Mathematik\\Albert-Ludwigs-Universit{\"a}t Freiburg\\Hermann-Herder-Stra{\ss}e 10\\79104 Freiburg i. Br. - Germany\\}
\email{alberto.maione@mathematik.uni-freiburg.de}
\author{F. Paronetto}
\address{Fabio Paronetto:  Dipartimento di Matematica ``Tullio Levi-Civita''\\ Universit\`a di Padova\\ via Trieste 63\\ 35121, Padova - Italy\\}
\email{fabio.paronetto@unipd.it}
\author{E. Vecchi}
\address{Eugenio Vecchi: Dipartimento di Matematica\\Università degli Studi di Bologna\\Piazza di Porta San Donato 5, 40126 Bologna, Italy}
\email{eugenio.vecchi2@unibo.it}
\thanks{A.M. is supported by the DFG SPP 2256 project ``Variational Methods for Predicting Complex Phenomena in Engineering Structures and Materials'' and by the University of Freiburg, Germany.}
\date{\today}
\begin{document}


\begin{abstract}
We consider sequences of elliptic and parabolic operators in divergence form and depending on a family of vector fields. We show compactness results with respect to $G$-convergence, or $H$-convergence, by means of the compensated compactness theory, in a setting in which the existence of affine functions is not always guaranteed, due to the nature of the family of vector fields.
\end{abstract}
\maketitle


\section{Introduction}


The asymptotic behaviour, as $h \to \infty$, from the point of view of $G$-convergence of a sequence of equations like
\begin{equation}\label{intro}
    E_h u = f
\end{equation}
in bounded domains $\Omega$ of $\mathbf{R}^n$ has been widely studied, in particular when $E_h$ is an elliptic or parabolic operator in divergence form, i.e.,
\[
E_h = -{\rm div\,}(a_h (x, \nabla))
\hskip15pt \textrm{or} \hskip15pt
E_h = {\displaystyle {{\partial_t} }} -
				{\rm div\,}(a_h (x,t,\nabla))\,,
\]
where $\nabla$ denotes the Euclidean gradient, $x \in \Omega$ and $t \in (0,T)$, with $T > 0$.

\noindent $G$-convergence was introduced for linear elliptic operators $E_h$ (for symmetric matrices $a_h$ with positive eigenvalues) by Spagnolo in a series of papers at the end of '60s
(see \cite{spagnolo1,spagnolo2,spagnolo3}) and studied later by the same author in many other papers.
Surely worthy to be recalled are \cite{deg-sp}, in the case of homogenization, where for the first time an explicit expression of the limit operator is given, \cite{col-spa} in which also a comparison between elliptic and parabolic $G$-convergence is made, and \cite{spagnolo} among the first papers about linear parabolic operators.

\noindent Regarding linear operators $E_h$ with non-symmetric matrices $a_h$, or $E_h$ nonlinear, this study involves a further difficulty: the lack of uniqueness of a representative for the limit operator, see e.g. \cite{simon}.

\noindent This problem is bypassed in the '70s by Murat and Tartar, who extended the notion of $G$-convergence, and call it $H$-convergence, to general linear and monotone elliptic operators $E_h$, see e.g. \cite{tartar-murat,T}.
We want to give a brief, and certainly not exhaustive, account regarding the case of nonlinear operators: we refer to \cite{cp-dm-df} for elliptic operators and to \cite{svan2} for parabolic ones.
Finally, we mention \cite{iquattrorussi}, where linear operator also of degree greater than two are considered, and the book \cite{pankov} where nonlinear both elliptic and parabolic operators, homogenization and random operators are presented. \\

\medskip

The aim of this paper is to extend the classical results for sequences of monotone operators to the more general setting of operators modeled on vector fields (i.e. replacing the Euclidean gradient $\nabla$ in \eqref{intro} with a family of vector fields $X$), continuing along the path traced in the recent papers \cite{M,MPSC1,MPSC2}. Before entering into the details, we want to recall that the literature concerning homogenization, $G$-convergence and integral representation of abstract functionals depending on vector fields is pretty vast, ranging from more rigid structures like Carnot groups, see e.g. \cite{BFTT, FGVN, FTT, FT, MV, M} and the references therein, up to the more general setting considered in \cite{MPSC1,MPSC2,EPV,EV}.

\noindent The setting we will take into account embraces the huge family of vector fields satisfying the H\"ormander condition.
We stress however that our results still apply to families of vector fields {\it not satisfying} the H\"ormander condition, provided that the following hypotheses are fulfilled: we consider a bounded domain $\Omega$ of ${\bf R}^n$ and a family of $m \leq n$ vector fields $X=(X_1, \ldots, X_m)$, defined and Lipschitz continuous on an open neighborhood $\Omega_0$ of $\overline{\Omega}$, such that the following conditions hold
\begin{itemize}
\item[(H1)] let $d:\R^n\times\R^n\to[0,\infty]$ be the so-called Carnot-Carath\'eodory distance function  induced by $X$, see e.g. \cite{FSSC2}. Then, $d(x,y)<\infty$ for any $x,y\in\Omega_0$, so that $d$ is a standard distance in $\Omega_0$, and $d$ is continuous with respect to the usual topology of $\R^n$;
\item[(H2)] for any compact set $K\subset\Omega_0$ there exist $r_K, C_K > 0$, depending on $K$, such that 
\[
|B_d(x,2r)|\leq C_K |B_d(x,r)|
\]
for any $x\in K$ and $r<r_K$. $B_d(x,r)$ denotes the open metric ball with respect to $d$, that is, $B_d(x,r):=\{y\in\Omega_0\,:\,d(x,y)<r\}$;
\item[(H3)] there exist geometric constants $c,C>0$ such that for every ball $B=B_{d}(\overline{x},r)$ with $cB:=B_{d}(\overline{x}, cr)\subseteq\Omega_0$, 
for every $u \in \mathrm{Lip}(c\overline{B})$ and for every $x \in \overline{B}$
\[
\left|u(x)-\frac{1}{|B|}\int_B u(y)\,dy\right|\leq C\int_{cB}|Xu(y)| \frac{d(x,y)}{|B_d(x,d(x,y))|}\,dy\,;
\]
\item[(LIC)]the $n$-dimensional vectors $X_1(x),\dots, X_m(x)$ are linearly independent for any $x\in\Omega\setminus Z_X$, where $Z_X$ is a Lebesgue measure zero subset of $\Omega$.
\end{itemize}
\medskip
As already mentioned, the main goal of the paper is to extend to the monotone, and possibly parabolic, case the result contained in \cite{MPSC2}, where the authors dealt only with the elliptic and linear case. We recall that $G$-convergence when the equations \eqref{intro} represent the Euler-Lagrange equations of a family of functionals, may be connected with $\Gamma$-convergence, see e.g. \cite{dalmaso,degiorgi-fran}. This is the approach used in the linear elliptic case in \cite{MPSC2} and it cannot be followed in the case of \textit{parabolic} problems driven by monotone operators, because the corresponding PDEs cannot be seen as Euler-Lagrange equations of appropriate functionals.
To mark once more the difference with respect to \cite{MPSC2} and to better stress the novelty of the paper, let us review the classical approach introduced by De Giorgi and Spagnolo \cite{deg-sp,spagnolo,spagnolo1,spagnolo2,spagnolo3}, which is based on the \textit{compensated compactness} and the \textit{existence of affine functions}.

\noindent\textit{\textbf{Compensated compactness}:
for any pair $(M_h)_h$, $(\nabla v_h)_h$ satisfying
\begin{align*}
    M_h\to M\text{ and }\nabla v_h\to\nabla v\quad\text{weakly in }L^2(\Omega;\mathbf{R}^n)\,,\\
    \mathrm{div}M_h=g\quad\text{for a fixed data }g\in L^2(\Omega)\,,
\end{align*}
it holds that
\begin{equation*}
    (M_h,\nabla v_h)_{\mathbf{R}^n}\to(M,\nabla v)_{\mathbf{R}^n}\quad\text{in }\mathcal{D}'(\Omega)\,.
\end{equation*}}
Roughly speaking, the compensated compactness theory ensures that the Euclidean inner product remains continuous with respect to weak convergence, even thought neither sequence is assumed to be relatively compact in $L^2(\Omega;\mathbf{R}^n)$. Here the \textit{lack of compactness} is \textit{compensated} by the boundedness of some combinations of partial derivatives.
\noindent Murat and Tartar in \cite{tartar-murat,T} extended the compensated compactness with a result involving the notion of {\it curl} (notice that $\mathrm{curl}\nabla v=0$). This theory is known as \textit{div-curl lemma} and its extension to the setting of Sobolev spaces depending on vector fields is a pretty delicate task, mainly because a proper notion of {\it intrinsic curl}, $\mathrm{curl}_X$, ensuring that $\mathrm{curl}_X Xv=0$, is not always available. We recall here that a possible notion of curl in the setting of Carnot groups has been given in \cite{FTT}, using the intrinsic complex of differential forms of Rumin.

\noindent In Theorem \ref{comp_comp} and Theorem \ref{div}, we show that in fact the classical technique due to Spagnolo is sufficient to get compensated compactness even without a proper generalization of the div-curl lemma, and in particular of the curl.

\noindent The compensated compactness is usually used to prove
the {\it closure of the class of operators in divergence form}, meaning that if
$$
A_h := -\mathrm{div} (a_h (x, \nabla)) \quad \text{$\g$-converges to } A\,,
$$
then the limit operator $A$ is in divergence form, i.e.,
\[
A = -\mathrm{div} (a (x, \nabla))\,,
\]
for some function $a$. In the Euclidean setting, the definition of $a$ goes through the existence of suitable {\it affine functions}.
\medskip

\noindent\textit{\textbf{Existence of affine functions}: for any fixed $\xi\in\mathbf{R}^n$, there exists a unique smooth enough function $u$ (at least $\mathbf{C}^2$) such that $\nabla u=\xi$.}

\noindent In the purely Euclidean framework, affine functions exist and can be represented by the Euclidean scalar product of the fixed vector $\xi$ with $x\in \mathbf{R}^n$.
Another example in which one can prove the existence of such functions is provided by the Heisenberg group. On the other hand, if we consider the case of the Grushin plane, we easily get an example of a family of vector fields satisfying our assumptions but for which affine functions may not exist. To be more precise, let $n=2$ and consider the Grushin gradient $X=(X_1,X_2)$ 
\[
X(x):=(\partial_{x},x\,\partial_{y})\,,\quad x\in\Omega_X:=\{(x,y)\in\mathbf{R}^2:\ x\neq0\}\,.
\]
Then, for any fixed $\xi=(\xi_1,\xi_2)\in\mathbf{R}^2$ with $\xi_2\neq0$, one can easily show that there exists no function $u\in\mathbf{C}^2(\Omega_X)$ such that $Xu(x)=\xi$.\\
\noindent Despite the possible non-existence of $X$-affine functions, we are however able to prove $G$-compactness by using the classical Euclidean affine functions and exploiting either the linear independence condition (LIC) on the $X$-gradient and the algebraic structure of the family $X$. See the proofs of Theorem \ref{MainTh_1} and Theorem \ref{MainTh} for the details. We finally stress that our proof drastically simplifies the one in \cite{MPSC2} for merely linear elliptic operators.

\medskip

The paper is organized as follows: in Section \ref{sect2}, we provide the functional setting of Sobolev spaces depending on vector fields and we state the main properties of the classes of monotone operators we are interested in. In Section \ref{elliptic}, we state and prove the main result in the elliptic framework and, in Section \ref{sec.MainResult}, in the parabolic setting.
Finally, in Lemma \ref{lemmafinale}, we show that the parabolic limit and the elliptic one coincide when the parabolic sequence of monotone operators is independent of time.


\section{Notations and Preliminaries}\label{sect2}


\subsection{Functional setting}



Let $X(x):=(X_1(x),\dots,X_m(x))$ be a given family of first order linear differential operators with Lipschitz coefficients on a bounded domain $\Omega$ of $\R^n$, that is,
\[
X_j(x)=\,\sum_{i=1}^nc_{ji}(x)\partial_i\quad j=1,\dots,m
\]
with $c_{ji}(x)\in\mathrm{Lip}(\Omega)$ for $j=1,\dots,m$, $i=1,\dots,n$. 

\noindent In the following, we will refer to $X$ as {\it $X$-gradient}.
As usual, we identify each $X_j$ with the vector field
\[
(c_{j1}(x),\dots,c_{jn}(x))\in\mathrm{Lip}(\Omega; \R^n)
\]
and we call 
\begin{equation*}
C(x) = [c_{ji}(x)]_{
{i=1,\dots,n}\atop{j=1,\dots,
m}}
\end{equation*}
the {\it coefficient matrix of the $X$-gradient}. 

\begin{definition}
For any $u\in L^1(\Omega)$ we define $Xu$ as an element of ${\mathcal D}'(\Omega;\R^m)$ as follows
\begin{equation}\label{defXjT}
\begin{split}
\langle Xu, \psi \rangle_{\mathcal{D}'\times\mathcal{D}}:&=(\langle X_1u, \psi_1 \rangle_{\mathcal{D}'\times\mathcal{D}},\dots,\langle X_mu, \psi_m \rangle_{\mathcal{D}'\times\mathcal{D}})\\
         &=-\int_\Omega u\left(\sum_{i=1}^n\partial_i( c_{1i}\,\psi_1),\dots, \sum_{i=1}^n\partial_i( c_{mi}\,\psi_m)\right)dx
\end{split}
\end{equation}
for any $\psi=(\psi_1,\dots,\psi_m)\in\mathcal{D}(\Omega;\R^m)=\mathbf{C}_c^\infty(\Omega;\R^m)$.
\end{definition}
If we set $X^T\psi:=(X^T_1\psi_1,\dots, X^T_m\psi_m)$, with
\begin{equation*}
X_j^T\varphi:=\,-\sum_{i=1}^n\partial_i( c_{ji}\,\varphi)=\,-\left(\mathrm {div}(X_j)+X_j\right)\varphi
\end{equation*}
for any $\varphi\in\mathbf{C}^\infty_c(\Omega)$ and $j=1,\dots,m$, then \eqref{defXjT} becomes
\[
\langle Xu, \psi \rangle_{\mathcal{D}'\times\mathcal{D}}=\,\int_\Omega u\, X^T\psi\,dx\quad\text{for any }\psi \in\mathbf{C}_c^\infty(\Omega;\R^m)\,.
\]
\begin{remark}
Notice that if $X=\nabla=\,(\partial_1,\dots,\partial_n)$, then
\begin{equation*}
X_j^T\varphi=\,-\partial_j \varphi\quad\text{for any }j=1,\dots,n\,.
\end{equation*}
\end{remark}
Let $a: \Omega\times \R^{m}\to \R^{m}$ as assume that $a(\cdot,X)$ is smooth enough in $\Omega$.
The operator $X$-divergence of $a(\cdot,X)$ is defined by
\begin{equation*}
\,{\rm div}_X(a(x,X)):=\sum_{j,i=1}^mX_j^T(a(x,X_i))\,,\quad x\in\Omega
\end{equation*}
and its domain is the set $W^{1,p}_{X,0}(\Omega)$ defined as follows.
\begin{definition}\label{Definition 1.1.1}
We define the anisotropic Sobolev spaces in the sense of Folland and Stein \cite{FS} as
\[
W_X^{1, p}(\Omega):=\left\{u\in L^p(\Omega):X_j u\in L^p(\Omega)\ {\hbox{\rm for }} j=1,\dots,m\right\}.
\]
These spaces, endowed with the norm
\[
\|u\|_{W^{1,p}_X(\Omega)}:=\left(\int_{\Omega} |u|^p\, dx + \int_{\Omega} |Xu|^p\, dx\right)^\frac{1}{p},
\]
are Banach spaces for any $1\leq p \leq \infty$, reflexive if $1< p < +\infty$.

\noindent Moreover, we denote $W^{1,p}_{X,0}(\Omega)$ the closure of $\mathbf{C}^1_c(\Omega)\cap {W^{1,p}_X(\Omega)}$ in $W^{1,p}_X(\Omega)$.
\end{definition}
Since vector fields $X_j$ have  Lipschitz continuous coefficients, then, by definition,
\begin{equation}\label{inclclassSobsp}
W^{1,p}(\Omega)\subset W^{1,p}_X(\Omega)\quad\forall\,p\in [1,\infty]
\end{equation}
and, for any $u\in W^{1,p}(\Omega)$,
\begin{equation*}
Xu(x)=\,C(x)\,Du(x)\quad\text{ for a.e. }x\in\Omega\,.
\end{equation*}
Here $W^{1,p}(\Omega)$ denotes the classical Sobolev space, or, equivalently, the space $W^{1,p}_X(\Omega)$ associated to the family $X=\,(\partial_1,\dots,\partial_n)$.
Inclusion \eqref{inclclassSobsp} can be strict, in particular when the number of vector fields is strictly less than the dimension of the space, and turns out to be continuous.
By the Lipschitz regularity assumption, the validity of the classical result \lq$H=W$\rq\, of Meyers and Serrin \cite{MS} is still guaranteed as proved, independently, in \cite{FSSC1} and \cite{GN}.
\medskip

As a consequence of conditions (H1) - (H3), it has been proved in \cite{FSSC2,MPSC2} the validity of a Rellich-type theorem and a Poincar\'e inequality in $W^{1,p}_{X,0}(\Omega)$.
\begin{theorem}[{\cite[Theorem 3.4]{FSSC2}}]\label{immersion}
Let $1\leq p<\infty$ and let $X$ satisfy conditions \text{\rm (H1), (H2)} and  \text{\rm (H3)}. Then, $W^{1,p}_{X,0}(\Omega)$ 
compactly embeds in $L^p(\Omega)$.
\end{theorem}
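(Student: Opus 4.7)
The plan is to reduce the problem to a Kolmogorov--Riesz--Fr\'echet-type compactness criterion after first upgrading the pointwise representation formula (H3) into a genuine $L^p$--Poincar\'e inequality over metric balls, and then smoothing bounded sequences by averages on Carnot--Carath\'eodory balls.

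\textbf{Step 1 (Poincar\'e inequality from (H3)).} First, I would turn the pointwise bound in (H3) into
\[
\|u - u_B\|_{L^p(B)} \le C\, r\, \|Xu\|_{L^p(cB)}
\]
for every ball $B = B_d(\bar x, r)$ with $cB \subset \Omega_0$ and every $u \in \mathrm{Lip}(c\overline{B})$. The idea is the classical Hedberg maximal-function trick: split the integral in the right-hand side of (H3) into dyadic annuli $\{y : 2^{-k-1}r < d(x,y) \le 2^{-k}r\}$, use the doubling property (H2) to estimate $d(x,y)/|B_d(x,d(x,y))|$ from above, and bound the dyadic pieces by the restricted Hardy--Littlewood maximal function of $|Xu|$ with respect to $d$, obtaining
\[
\left|u(x) - u_B\right| \le C\, r\, \bigl(M_{cB}|Xu|\bigr)(x).
\]
The $L^p$--boundedness of the maximal operator on spaces of homogeneous type (guaranteed by (H2) for $p>1$; for $p=1$, replace $M$ by the fractional estimate built in the formula directly) then gives the desired Poincar\'e inequality.

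\textbf{Step 2 (Mollification by metric averages).} For $u \in W^{1,p}_{X,0}(\Omega)$, extend $u$ by zero outside $\Omega$ to a function $\tilde u \in W^{1,p}_X(\Omega_0)$ with $\|X\tilde u\|_{L^p(\Omega_0)} = \|Xu\|_{L^p(\Omega)}$, which is legitimate because $u$ is approximated in $W^{1,p}_X$-norm by functions in $\mathbf{C}^1_c(\Omega)$. For small $\varepsilon>0$, define
\[
u^\varepsilon(x) := \frac{1}{|B_d(x,\varepsilon)|}\int_{B_d(x,\varepsilon)} \tilde u(y)\, dy, \qquad x \in \Omega.
\]
Applying the Poincar\'e inequality of Step 1 on the ball $B = B_d(x,\varepsilon)$ and integrating over $x$, together with Fubini and the doubling (H2) to exchange the roles of $x$ and $y$, I would deduce
\[
\|u - u^\varepsilon\|_{L^p(\Omega)} \le C\, \varepsilon\, \|Xu\|_{L^p(\Omega)},
\]
with $C$ independent of $u$ and of $\varepsilon$ (for $\varepsilon$ smaller than a fixed $r_K$ depending only on $\overline{\Omega}$).

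\textbf{Step 3 (Precompactness of mollified families).} Let $(u_h)_h$ be bounded in $W^{1,p}_{X,0}(\Omega)$. For each fixed $\varepsilon$, the family $(u_h^\varepsilon)_h$ is uniformly bounded in $L^\infty(\Omega)$ by H\"older and the volume lower bound coming from (H1)--(H2). Equi-continuity (with respect to the Euclidean topology, which by (H1) is comparable to the $d$-topology on compact subsets of $\Omega_0$) follows from a direct estimate of $|u_h^\varepsilon(x_1) - u_h^\varepsilon(x_2)|$ via the symmetric difference of the two metric balls $B_d(x_i,\varepsilon)$ and the doubling condition. By Ascoli--Arzel\`a, $(u_h^\varepsilon)_h$ is precompact in $\mathbf{C}(\overline{\Omega})$, hence in $L^p(\Omega)$.

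\textbf{Step 4 (Diagonal argument).} Combining Steps 2 and 3 through a standard $\varepsilon/3$-argument (extract a convergent subsequence for $\varepsilon = 1/k$, then diagonalize) yields a subsequence of $(u_h)_h$ which is Cauchy in $L^p(\Omega)$, proving the compact embedding.

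The main obstacle is Step 1: turning the weighted pointwise bound in (H3) into the $L^p$--Poincar\'e inequality via the dyadic/maximal function approach, and making sure the constants depend only on the doubling constant $C_K$ over compact subsets of $\Omega_0$, so that the whole argument can then be localized to $\overline{\Omega}$.
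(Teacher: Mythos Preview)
The paper does not prove this theorem: it is stated as a citation of \cite[Theorem 3.4]{FSSC2} and is used as a black box throughout (in Theorem \ref{comp_comp}, Lemma \ref{lem6.5_ell}, etc.). There is therefore no ``paper's own proof'' to compare against.

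That said, your sketch is essentially the strategy carried out in \cite{FSSC2}: upgrade the representation formula (H3) to a local $(p,p)$--Poincar\'e inequality via the dyadic decomposition and maximal function bound in a doubling space, then run a Rellich-type argument by approximating with metric-ball averages and invoking Ascoli--Arzel\`a (or Kolmogorov--Riesz). Two points deserve care if you flesh this out. First, in Step 3 the equicontinuity of $u_h^\varepsilon$ requires some control on $x\mapsto |B_d(x,\varepsilon)|$ and on the measure of thin annuli $B_d(x,\varepsilon+\delta)\setminus B_d(x,\varepsilon-\delta)$; in a doubling metric space this is standard but not entirely free, and in \cite{FSSC2} it is handled carefully. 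Second, for $p=1$ the Hardy--Littlewood maximal operator is not bounded on $L^1$, so in Step 1 you should bypass it via the fractional-integral/weak-type route you allude to (this is also how \cite{FSSC2} proceeds). With these caveats, the outline is sound and matches the cited source.
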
 
\begin{proposition}[{\cite[Proposition 2.16]{MPSC2}}]\label{PoincareW1p0}
Under the hypotheses of the previous theorem and also assuming that $\Omega$ is connected, there exists a positive constant $c_{p,\Omega}$, depending only on $p$ and $\Omega$, such that 
\begin{equation*}
c_{p,\Omega}\,\int_\Omega |u|^p\,dx\leq\, 
\int_\Omega |Xu|^p\,dx\quad\text{for any }u\in W^{1,p}_{X,0}(\Omega)
\end{equation*}
and
\begin{equation*}
\|u\|_{W^{1,p}_{X,0}(\Omega)}:=\left(\int_{\Omega} |Xu|^p\, dx\right)^{\frac{1}{p}}
\end{equation*}
is a norm in $W^{1,p}_{X,0}(\Omega)$ equivalent to $\|\cdot\|_{W^{1,p}_X(\Omega)}$.
\end{proposition}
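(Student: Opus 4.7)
I would establish the Poincar\'e inequality by a standard Rellich-type argument by contradiction, and then read off the norm equivalence immediately from it. Suppose the inequality fails; then there exists $\{u_h\}\subset W^{1,p}_{X,0}(\Omega)$ with $\|u_h\|_{L^p(\Omega)}=1$ and $\|Xu_h\|_{L^p(\Omega;\R^m)}\to 0$. The sequence is bounded in $W^{1,p}_{X,0}(\Omega)$, so by Theorem \ref{immersion} a subsequence converges strongly in $L^p(\Omega)$ to some $u$ with $\|u\|_{L^p(\Omega)}=1$. Coupled with the strong convergence $Xu_h\to 0$ in $L^p(\Omega;\R^m)$, this yields $u_h\to u$ strongly in $W^{1,p}_X(\Omega)$; since $W^{1,p}_{X,0}(\Omega)$ is closed in $W^{1,p}_X(\Omega)$ by definition, I conclude $u\in W^{1,p}_{X,0}(\Omega)$ and $Xu=0$ a.e.

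Next, I would exploit the pointwise representation (H3), extended from $\mathrm{Lip}(c\overline{B})$ to $W^{1,p}_X$ by the $H=W$ density result of \cite{FSSC1,GN}: on any ball $B=B_d(\bar x,r)$ with $cB\subseteq\Omega_0$,
\[
|u(x)-u_B|\leq C\int_{cB}|Xu(y)|\,\frac{d(x,y)}{|B_d(x,d(x,y))|}\,dy = 0 \qquad\text{for a.e. }x\in B.
\]
Hence $u$ is locally constant; by (H1) every point of $\Omega$ sits inside such a $d$-ball and, by connectedness of $\Omega$, $u\equiv c$ a.e. in $\Omega$ with $|c|=|\Omega|^{-1/p}\neq 0$.

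The crux, and the step I expect to be the main obstacle, is then to rule out nonzero constants from $W^{1,p}_{X,0}(\Omega)$, since we cannot appeal to a trace theorem at this level of generality on $\partial\Omega$. For this, I would argue via extension by zero: pick $\phi_k\in\mathbf{C}^1_c(\Omega)\cap W^{1,p}_X(\Omega)$ with $\phi_k\to c$ in $W^{1,p}_X(\Omega)$. Since each $\phi_k$ has compact support in $\Omega$, its extension by zero $\tilde\phi_k$ lies in $\mathbf{C}^1_c(\Omega_0)$, and $\|X\tilde\phi_k\|_{L^p(\Omega_0)}=\|X\phi_k\|_{L^p(\Omega)}\to 0$, so $\{\tilde\phi_k\}$ is Cauchy in $W^{1,p}_X(\Omega_0)$ with limit $c\chi_\Omega$. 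Therefore $c\chi_\Omega\in W^{1,p}_X(\Omega_0)$ with $X(c\chi_\Omega)=0$ a.e. Applying (H3) now on a ball $B=B_d(x_0,r)$ centered at some $x_0\in\partial\Omega$ with $cB\subseteq\Omega_0$ (admissible for $r$ small since $\Omega_0$ is a neighborhood of $\overline\Omega$), the same computation forces $c\chi_\Omega$ to be a.e. constant on $B$. But by (H1), $B$ contains a Euclidean neighborhood of $x_0$ and hence meets both $\Omega$, where $c\chi_\Omega=c$, and $\Omega_0\setminus\overline\Omega$, where $c\chi_\Omega=0$; this is impossible unless $c=0$, contradicting $|c|>0$. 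The equivalence of the two norms follows at once from the Poincar\'e inequality just established, since $\int_\Omega|Xu|^p\,dx\leq\|u\|^p_{W^{1,p}_X(\Omega)}\leq(1+c_{p,\Omega}^{-1})\int_\Omega|Xu|^p\,dx$.
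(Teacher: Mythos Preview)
The paper does not supply a proof of this proposition; it is quoted verbatim from \cite{MPSC2}, so there is nothing in the present paper to compare your argument against. That said, the compactness--contradiction scheme you outline is exactly the one used in \cite[Proposition~2.16]{MPSC2}, and your proof is essentially correct.

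Two small points are worth tightening. First, your use of (H3) ``extended to $W^{1,p}_X$ by density'' is a little informal: (H3) is a \emph{pointwise} inequality for Lipschitz functions, and one does not extend pointwise inequalities by density directly. The clean route (and the one in \cite{FSSC2}) is to deduce from (H3) an \emph{integral} Poincar\'e inequality on $d$-balls via the mapping properties of the fractional-integral kernel $d(x,y)/|B_d(x,d(x,y))|$, and then pass that integral inequality to $W^{1,p}_X$ by $H=W$. With $Xu=0$ this gives $\|u-u_B\|_{L^p(B)}=0$, hence $u$ is a.e.\ constant on $B$, which is what you need.

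Second, in the extension-by-zero step you should center the ball at a point $x_0\in\partial\overline{\Omega}$ rather than merely $x_0\in\partial\Omega$: for a general bounded domain one can have $\partial\Omega\setminus\partial\overline{\Omega}\neq\emptyset$ (e.g.\ slit domains), and at such points a small ball need not meet $\Omega_0\setminus\overline{\Omega}$. Since $\overline{\Omega}$ is compact and $\Omega_0$ is open with $\overline{\Omega}\subset\Omega_0$, the set $\partial\overline{\Omega}$ is nonempty and contained in $\partial\Omega\cap\Omega_0$; choosing $x_0$ there guarantees that any small $d$-ball around $x_0$ intersects both $\Omega$ and $\Omega_0\setminus\overline{\Omega}$ in sets of positive measure, and your contradiction goes through.
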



\subsection{Space-time setting}


Let $(p,p')$ be a H\"older's conjugate pair, with  $p \geq 2$.
We denote
\begin{align*}
V := W^{1,p}_{X,0}(\Omega) \,, \qquad H := L^2(\Omega) \,, \qquad V' := W^{-1,p'}_X (\Omega) \,,
\end{align*}
where $V'$ is the dual space of $V$, and we identify the dual space of $H$, $H'$, with $H$ itself, in such a way that
\begin{align}
\label{immersioni}
V \subset H \subset V'
\end{align}
where the embeddings are dense and continuous. In a similar way, we define
\begin{align*}
\mathcal{V} := L^p(0,T;V)\,,  \qquad \mathcal{H} := L^2(0,T;H) \,, \qquad \mathcal{V}' := L^{p'}(0,T;V') 
\end{align*}
and, by \eqref{immersioni}, we have
\[
\mathcal{V} \subset \mathcal{H} \subset \mathcal{V}'
\]
with continuous and dense embeddings. We endow $\mathcal{V}$ and $\mathcal{H}$, respectively, with the following norms:
\begin{align*}
\| u \|_\mathcal{V} := \left( \int_0^T \| u(t) \|_V^p \, dt \right)^\frac{1}{p} ,		\qquad
\| u \|_\mathcal{H} := \left( \int_0^T \| u(t) \|_H^2 \,dt \right)^\frac{1}{2} ,
\end{align*}
and $\mathcal{V}'$ with the natural norm
\begin{align*}
\| u \|_{\mathcal{V}'}:=\left(\int_0^T\|u(t)\|_{V'}^{p'}\,dt\right)^\frac{1}{p'} .
\end{align*}

\begin{definition}
\label{derivata}
Given two Banach spaces $Y_1, Y_2$, we say that $v \in L^1(0,T; Y_2)$ is the generalized derivative of $u \in L^1(0,T; Y_1)$ if
\begin{align*}
 \int_0^T v(t) \varphi (t) \, dt = - \int_0^T u(t) \varphi'(t) \, dt
\end{align*}
for every $\varphi \in \mathbf{C}^\infty_c ((0,T); \R)$.
\end{definition}

\noindent
Notice that the integral in the previous definition is the Bochner integral and that $ \int_0^T v(t) \varphi (t) \, dt$, $\int_0^T u(t) \varphi'(t) \, dt \in Y_1 \cap Y_2$ (see, for instance, \cite[Chapter 23]{Z2A} or \cite[Chapter 3]{show2}). \\ [0.4em]
The natural space of solutions to parabolic PDEs is the Banach space
\begin{align*}
\mathcal{W} := \{u\in\mathcal{V} \, | \, u' \in \mathcal{V}'\}\,,
\end{align*}
endowed with the norm
\[
\| u \|_{\mathcal{W}} := \| u \|_{\mathcal{V}} + \| u'\|_{\mathcal{V}'}\,.
\]
$u'$ is to be intended as the generalized derivative of $u$ (see Definition \ref{derivata}).

\noindent In the following proposition we recall some results regarding the space $\mathcal{W}$, see, e.g., Proposition 1.2, Corollary 1.1 and Proposition 1.3 in \cite{show2}.
\begin{proposition}
\label{propW}
The space $\mathcal{W}$ continuously embeds in $\mathbf{C}^0 ([0,T] ; H)$ and, for every $u, v \in \mathcal{W}$ and for every $t, s \in [0,T]$, the following generalized integration by parts formula holds:
\begin{align*}
(u(t),v(t))_{H} & - (u(s),v(s))_{H} =\\
& = \int_s^t \langle  u'(\tau),v(\tau)\rangle_{V'\times V} \,d \tau + \int_s^t \langle  v'(\tau),u(\tau)\rangle_{V'\times V}\, d\tau\,.
\end{align*}
Moreover, the space $\mathcal{W}$ compactly embeds in $L^p (0,T ; H)$.
\end{proposition}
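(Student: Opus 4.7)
The plan is to verify the three assertions in order, using only the abstract Gelfand-triple structure $V\subset H\subset V'$ recorded in \eqref{immersioni} together with the compact embedding $V\hookrightarrow\hookrightarrow L^p(\Omega)$ provided by Theorem \ref{immersion}. None of the three statements depend on the specific nature of $V$, so the argument is the one classical for evolution triples, and I will only sketch how it specializes here.

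For the continuous embedding $\mathcal{W}\hookrightarrow\mathbf{C}^0([0,T];H)$, the first step is to show that the space of smooth-in-time functions, say $\mathbf{C}^1([0,T];V)$, is dense in $\mathcal{W}$. I would obtain this by extending $u\in\mathcal{W}$ by reflection across the endpoints $0$ and $T$ and then convolving in time with a standard mollifier; the extension lies in $L^p_{\mathrm{loc}}(\R;V)$ with distributional derivative in $L^{p'}_{\mathrm{loc}}(\R;V')$, and mollification commutes with the generalized derivative of Definition \ref{derivata}. On smooth functions the scalar product with a test $v\in\mathbf{C}^1([0,T];V)$ satisfies the product rule, so that $t\mapsto(u(t),v(t))_H$ is of class $\mathbf{C}^1$ with
\[
\tfrac{d}{dt}(u(t),v(t))_H=\langle u'(t),v(t)\rangle_{V'\times V}+\langle v'(t),u(t)\rangle_{V'\times V}.
\]
Taking $v=u$ and integrating, then averaging the inequality $\|u(t)\|_H^2\leq\|u(s)\|_H^2+2\|u'\|_{\mathcal{V}'}\|u\|_{\mathcal{V}}$ over $s\in[0,T]$, gives the uniform bound $\sup_{t\in[0,T]}\|u(t)\|_H\leq C\,\|u\|_{\mathcal{W}}$. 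Density and completeness then let the smooth representative descend to every $u\in\mathcal{W}$, yielding both the embedding into $\mathbf{C}^0([0,T];H)$ and, after passing to the limit in the product-rule identity, the generalized integration-by-parts formula.

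For the compact embedding $\mathcal{W}\hookrightarrow\hookrightarrow L^p(0,T;H)$ I would invoke the Aubin-Lions-Simon compactness theorem. The two structural hypotheses are satisfied: $V\hookrightarrow\hookrightarrow H$ because $V$ compactly embeds into $L^p(\Omega)$ by Theorem \ref{immersion} and, since $\Omega$ is bounded and $p\geq2$, $L^p(\Omega)\hookrightarrow L^2(\Omega)=H$ continuously; and $H\hookrightarrow V'$ continuously by \eqref{immersioni}. The heart of the argument is Ehrling's lemma: for every $\varepsilon>0$ there exists $C_\varepsilon>0$ such that $\|v\|_H\leq\varepsilon\|v\|_V+C_\varepsilon\|v\|_{V'}$ for all $v\in V$. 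Given a bounded sequence $(u_h)\subset\mathcal{W}$, one extracts a subsequence with $u_h\rightharpoonup u$ in $\mathcal{V}$ and $u_h'\rightharpoonup u'$ in $\mathcal{V}'$; applied pointwise in time to $u_h-u$ and combined with the uniform bound $\|u_h(t+\tau)-u_h(t)\|_{V'}\leq\int_t^{t+\tau}\|u_h'(\sigma)\|_{V'}d\sigma$, Ehrling's inequality supplies equi-integrability of time-translates, so the Fréchet-Kolmogorov criterion yields strong convergence in $L^p(0,T;H)$.

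The main obstacle is the third point: producing genuine compactness in $L^p(0,T;H)$ requires combining the compactness $V\hookrightarrow\hookrightarrow H$ with a control on time oscillations through the derivative in $\mathcal{V}'$, and this interplay via Ehrling's lemma is the only nontrivial analytical input. The first two claims are, by contrast, essentially smooth-approximation plus the abstract product rule, and transfer to our vector-field setting without any modification because everything is stated at the level of the triple $V\subset H\subset V'$.
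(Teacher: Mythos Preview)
Your sketch is correct and follows the standard evolution-triple arguments (density by reflection/mollification, the abstract product rule, and Aubin--Lions--Simon via Ehrling's lemma). However, the paper does not actually prove this proposition: it is stated as a recalled result, with the sentence preceding it pointing to Proposition~1.2, Corollary~1.1 and Proposition~1.3 in \cite{show2} (Showalter's monograph). So there is no paper proof to compare against; you have supplied what the authors deliberately omitted.

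One small remark on your compactness argument: the only place where the specific vector-field setting enters is the compact embedding $V\hookrightarrow\hookrightarrow H$, and you correctly identify that this follows from Theorem~\ref{immersion} composed with $L^p(\Omega)\hookrightarrow L^2(\Omega)$ (using $p\ge 2$ and $|\Omega|<\infty$). Once that is in place, Aubin--Lions applies verbatim, so your reduction to the abstract triple is exactly the right observation.
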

Now consider the operators
\[
A : V \to V' , \hskip30pt \mathcal{A} : \mathcal{V} \to \mathcal{V}'
\]
and
\[
\mathcal{P}: \mathcal{W} \to \mathcal{V}' \,, \quad \mathcal{P} u := u' + \mathcal{A} u\,.
\]
\begin{definition}\label{soluzione}
Let $g \in V'$. 
We define $u \in V$ a solution to the problem
\begin{equation}\label{Elll}
A u = g \qquad \text{in } V'
\end{equation}
if
\begin{align*}
\langle A u , v \rangle_{V' \times V} = \langle g, v \rangle_{V'\times V}\quad\text{for every }v \in V.
\end{align*}
Moreover, if $f\in\mathcal{V}'$ and $\varphi\in H$, we define $u \in \mathcal{W}$ a solution to the problem
\begin{equation}\label{prob}
\begin{cases}
u' + \mathcal{A} u = f \quad&\text{in }\mathcal{V}'	\\
u(0) = \varphi\quad&\text{in }H
\end{cases}
\end{equation}
if
\begin{align*}
    \langle u'(t),v\rangle_{V'\times V}+\langle \mathcal{A}u(t),v\rangle_{V'\times V}=\langle f(t),v\rangle_{V'\times V}
\end{align*}
for a.e. $t \in (0,T)$, for every $v \in V$ and if $u(0) = \varphi$ in $H$.
\end{definition}

\noindent
We conclude this part by recalling a result useful for the sequel.
\begin{lem}[{\cite[Lemma 7.8]{cp-dm-df}}]
\label{chiado'}
Let $U$ be a bounded open set in $\R^k$, $\vartheta_1, \dots \vartheta_m$ be non-negative numbers such that
$\vartheta_1 + \dots + \vartheta_m \leq 1$, and assume that $(r_{1,h})_h, \dots, (r_{m,h})_h$
and $(s_h)_h$ are sequences in $L^1(U)$ such that, for any $i$
\[
r_{i,h} \geq 0 \quad\text{and}\quad
|s_h| \leq {r_{1,h}^{\vartheta_1}} \cdot \, \dots \, \cdot {r_{m,h}^{\vartheta_m}}
\quad \text{a.e. in } U \text{ for  every }h\in{\bf N}\,.
\]
Moreover, assume the existence of $r_1, \dots, r_m, s \in L^1(U)$ such that
\[
r_{i,h} \to r_i\quad\text{and} \quad s_h \to s\quad \textit{in } \mathcal{D}' (U)
\]
as $h\to\infty$, for any $i=1, \dots m$. Then,
\[
|s| \leq r_1^{\vartheta_1} \cdot \, \dots \, \cdot r_m^{\vartheta_m} \quad \textit{a.e. in } U\,.
\]
\end{lem}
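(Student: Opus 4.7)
My plan is to test the pointwise bound $|s_h|\leq\prod_i r_{i,h}^{\vartheta_i}$ against nonnegative smooth compactly supported functions, use the distributional convergence of $(s_h)$ and $(r_{i,h})$ to pass to the limit, and finally localize via Lebesgue differentiation.

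First, for any $\varphi\in\mathbf{C}_c^\infty(U)$ with $\varphi\geq0$, the pointwise inequality gives $|\int_U \varphi\, s_h\,dx|\leq\int_U\varphi\prod_i r_{i,h}^{\vartheta_i}\,dx$. Since $s_h\to s$ in $\mathcal{D}'(U)$, taking $\liminf$ in $h$ yields
\[
\left|\int_U \varphi\, s\,dx\right|\leq\liminf_{h\to\infty}\int_U\varphi\prod_{i=1}^m r_{i,h}^{\vartheta_i}\,dx.
\]
To handle the right-hand side I would apply the generalized H\"older inequality with respect to the finite measure $\varphi\,dx$, after augmenting the product with the constant factor $r_{0,h}\equiv 1$ and the weight $\vartheta_0:=1-\sum_{i=1}^m\vartheta_i\geq 0$ so that the weights sum to $1$. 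This gives
\[
\int_U\varphi\prod_{i=1}^m r_{i,h}^{\vartheta_i}\,dx\leq\left(\int_U\varphi\,dx\right)^{1-\sum_{i=1}^m\vartheta_i}\prod_{i=1}^m\left(\int_U\varphi\, r_{i,h}\,dx\right)^{\vartheta_i}.
\]
The right-hand side depends linearly on each $r_{i,h}$, so the hypothesis $r_{i,h}\to r_i$ in $\mathcal{D}'(U)$ lets me pass to the limit inside every factor, obtaining
\[
\left|\int_U\varphi\, s\,dx\right|\leq\left(\int_U\varphi\,dx\right)^{1-\sum_{i=1}^m\vartheta_i}\prod_{i=1}^m\left(\int_U\varphi\, r_i\,dx\right)^{\vartheta_i}
\]
for every nonnegative $\varphi\in\mathbf{C}_c^\infty(U)$.

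Next, I would approximate the characteristic function $\chi_B$ of a ball $B\Subset U$ from above by smooth nonnegative cutoffs and pass to the limit on both sides of the previous inequality by dominated convergence (using $s,r_i\in L^1(U)$). After dividing by $|B|$ and collecting the $|B|$-powers, the inequality becomes
\[
\frac{1}{|B|}\left|\int_B s\,dx\right|\leq\prod_{i=1}^m\left(\frac{1}{|B|}\int_B r_i\,dx\right)^{\vartheta_i}.
\]
Shrinking $B$ around any common Lebesgue point of $s,r_1,\dots,r_m$ (a set of full measure in $U$) and invoking the Lebesgue differentiation theorem gives the desired pointwise bound $|s(x_0)|\leq\prod_{i=1}^m r_i(x_0)^{\vartheta_i}$ almost everywhere.

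The main technical point is the H\"older step, which replaces the nonlinear product $\prod_i r_{i,h}^{\vartheta_i}$ by an expression depending linearly on each $r_{i,h}$ separately. This linearization is exactly what allows the limit to be commuted with the product when only $\mathcal{D}'$-convergence is available; without it, no uniform bound or mode of strong convergence would be at hand to justify interchanging limit and nonlinearity. The degenerate cases (some $\vartheta_i=0$, or $\sum\vartheta_i<1$) are harmless bookkeeping absorbed by the dummy factor $r_{0,h}\equiv 1$.
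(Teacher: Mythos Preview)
Your argument is correct. Note, however, that the paper does not supply its own proof of this lemma: it is stated with an attribution to \cite[Lemma~7.8]{cp-dm-df} and then used as a black box in the proofs of Lemma~\ref{lem6.5_ell} and Theorem~\ref{MainTh}. The route you take---apply the generalized H\"older inequality with respect to the measure $\varphi\,dx$ (after padding with the dummy factor $r_{0,h}\equiv 1$ so the exponents sum to~$1$) to replace the nonlinear quantity $\int_U\varphi\prod_i r_{i,h}^{\vartheta_i}\,dx$ by a product of linear functionals of the $r_{i,h}$, pass to the limit via distributional convergence, and then localize by approximating characteristic functions of balls and invoking Lebesgue differentiation---is precisely the standard argument, and coincides with the proof in the cited reference.
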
 


\subsection{Position of the problems}\label{Problema}


\begin{definition}\label{M,alpha,beta}
Let $\alpha\leq\beta$ be positive constants. 
We define $\mathcal{M}_{\Omega\times(0,T)}(\alpha,\beta,p)$ the class of Carath\'eodory functions $a: \Omega\times(0,T)\times \R^{m}\to \R^{m}$ satisfying
\begin{itemize}
	\item [$(i)$] $a (x,t,0)=0$;
	\item [$(ii)$] $\left( a(x,t,\xi) - a(x,t,\eta), \xi-\eta\right)_{\R^m}\geq \alpha |\xi-\eta|^p$;
	\item [$(iii)$] $|a (x,t,\xi) - a(x,t,\eta)| \leq \beta\left[1+|\xi|^p+|\eta|^p\right]^\frac{p-2}{p}|\xi-\eta|$
\end{itemize}
for a.e. $(x,t)\in\Omega\times(0,T)$ for every $\xi,\eta\in \R^{m}$.
\medskip

\noindent
We also denote by $\mathcal{M}_{\Omega}(\alpha,\beta,p)$ the subclass of $\mathcal{M}_{\Omega \times (0,T)}(\alpha,\beta,p)$ of functions $a$ independent of $t$.
\end{definition}

\begin{remark}
\label{chebelfreschetto}
If $a \in \mathcal{M}_{\Omega\times(0,T)}(\alpha,\beta,p)$, then
\begin{itemize}
    \item [$(iii)'$] $|a(x,t,\xi) - a(x,t,\eta)| \leq \beta' \left[1+|\xi|^p+|\eta|^p\right]^{\frac{p-2}{p-1}} |\xi-\eta|^{\frac{1}{p-1}}$
\end{itemize}
for a.e. $(x,t) \in \Omega \times (0,T)$ for every $\xi, \eta \in \R^{m}$ and for some $\beta' \geq \beta$. 
\medskip

\noindent Indeed, by $(i)$, $(ii)$, $(iii)$ and Cauchy-Schwarz inequality, we get
\begin{align*}
    |a(x,t,\xi) - a(x,t,\eta)| &\leq \beta \left[1+|\xi|^p+|\eta|^p\right]^{\frac{p-2}{p}} \alpha^{-\frac{1}{p}} (a(x,t,\xi)-a(x,t,\eta), \xi - \eta)_{\R^m}^{\frac{1}{p}}\\
    &\leq\alpha^{-\frac{1}{p}}\beta\left[1 + |\xi|^p + |\eta|^p\right]^{\frac{p-2}{p}}|a(x,t,\xi)-a(x,t,\eta)|^{\frac{1}{p}}|\xi - \eta|^{\frac{1}{p}}\,,
\end{align*}
i.e.,
\begin{align*}
    |a(x,t,\xi) - a(x,t,\eta)|^{\frac{p-1}{p}} \leq\alpha^{-\frac{1}{p}}\beta \left[1+|\xi|^p+|\eta|^p\right]^{\frac{p-2}{p}}|\xi - \eta|^{\frac{1}{p}}\,.
\end{align*}
The thesis follows choosing $\beta'\geq \big( \alpha^{-1}\,\beta^p\big)^\frac{1}{p-1}$.
\end{remark}

\begin{definition}\label{M,alpha,beta'}
We denote $\tilde{\mathcal{M}}_{\Omega\times(0,T)}(\alpha,\beta',p)$ and $\tilde{\mathcal{M}}_{\Omega}(\alpha,\beta',p)$, respectively, the class of Carath\'eodory functions $a: \Omega\times(0,T)\times \R^{m}\to \R^{m}$ satisfying $(i)$, $(ii)$ and $(iii)'$ and its subclass of functions independent of $t$. By Remark \ref{chebelfreschetto},
\[
\mathcal{M}_{\Omega\times(0,T)}(\alpha,\beta,p)\subset\tilde{\mathcal{M}}_{\Omega\times(0,T)}(\alpha,\beta',p).
\]
\end{definition}

\begin{remark}
\label{wembledon}
Let $a \in \mathcal{M}_{\Omega\times(0,T)}(\alpha,\beta,p)$ and define the operators
\[
A(t) : V \to V' ,	\quad A(t) u := \mathrm{div}_X( a (x,t,Xu))\,,\quad t \in [0,T]
\]
and
\[
\mathcal{A} : \mathcal{V} \to \mathcal{V}' ,	\quad \mathcal{A} u := \mathrm{div}_X( a(x,t,Xu))	\,.
\]
Notice that $\mathcal{A} u (t) = A(t) u(t)$ for $u \in \mathcal{V}$ and that $[0,T] \ni t \mapsto \langle A(t)u,v\rangle_{V'\times V}$ is measurable for every $u, v \in V$. 
\end{remark}
\begin{theorem}[{\cite[Theorem 26.A]{Z2A}}]
\label{existencee}
Let $a \in \mathcal{M}_{\Omega}(\alpha,\beta,p)$ (or, equivalently, $a \in \tilde{\mathcal{M}}_{\Omega}(\alpha,\beta',p)$) and define $A u := \mathrm{div}_X (a (x, Xu (x)))$.
Then, for every $g \in V'$ there exists a unique solution $u\in V$ to problem \eqref{Elll}. Moreover
\begin{align*}
\| u \|_{V} \leq \alpha^{-\frac{1}{p-1}}  \| g \|_{V'}^{\frac{1}{p-1}} \, .
\end{align*}
\end{theorem}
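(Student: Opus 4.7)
The plan is to reduce the statement to a direct application of the Browder--Minty type existence theorem \cite[Theorem 26.A]{Z2A} for monotone operators on reflexive Banach spaces. By Proposition \ref{PoincareW1p0} and the reflexivity of $L^p$, $V$ is a reflexive, separable Banach space with norm $\|u\|_V = \bigl(\int_\Omega |Xu|^p\,dx\bigr)^{1/p}$. Unwinding the definition of $\mathrm{div}_X$ and using the adjoint identity $\int_\Omega u\, X_j^T\psi\,dx = \int_\Omega \psi\, X_j u\,dx$, the operator $A : V \to V'$ is encoded by the duality pairing
\begin{equation*}
    \langle Au, v\rangle_{V'\times V} = \int_\Omega (a(x,Xu), Xv)_{\R^m}\,dx\,, \qquad u,v\in V\,.
\end{equation*}
It therefore suffices to verify that $A$ is bounded, hemicontinuous, strictly monotone, and coercive.

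Boundedness, strict monotonicity, coercivity, and the a priori estimate all fall out of the structure conditions. Combining $(i)$ and $(iii)$ with $\eta=0$ yields the growth bound $|a(x,\xi)| \leq \beta(1+|\xi|^p)^{(p-1)/p}$, so that $a(\cdot, Xu) \in L^{p'}(\Omega;\R^m)$ with $\|a(\cdot,Xu)\|_{L^{p'}} \leq C(1 + \|u\|_V^{p-1})$, and H\"older's inequality yields $\|Au\|_{V'} \leq C(1 + \|u\|_V^{p-1})$. From $(ii)$, integration gives $\langle Au - Av, u-v\rangle_{V'\times V}\geq \alpha \|u-v\|_V^p$, which simultaneously delivers strict monotonicity and uniqueness. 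Taking $\eta = 0$ in $(ii)$ and invoking $(i)$ gives $\langle Au, u\rangle_{V'\times V}\geq \alpha\|u\|_V^p$, which furnishes coercivity (since $p \geq 2 > 1$) and also the desired bound: if $u$ solves \eqref{Elll}, then
\begin{equation*}
    \alpha \|u\|_V^p \leq \langle g,u\rangle_{V'\times V}\leq \|g\|_{V'}\|u\|_V\,,
\end{equation*}
from which $\|u\|_V\leq \alpha^{-1/(p-1)}\|g\|_{V'}^{1/(p-1)}$ follows at once.

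The only delicate step is the hemicontinuity of $A$: for fixed $u,v,w\in V$, I must check that $\lambda\mapsto \langle A(u+\lambda v),w\rangle_{V'\times V}$ is continuous on $\R$. Since $a$ is Carath\'eodory, $a(x, Xu+\lambda Xv)\to a(x, Xu+\lambda_0 Xv)$ pointwise a.e. as $\lambda\to\lambda_0$, and the growth bound above, together with the inequality $|Xu+\lambda Xv|^p \leq C(|Xu|^p+|Xv|^p)$ on any bounded $\lambda$-interval, provides an $L^{p'}$-dominating function independent of $\lambda$. Lebesgue dominated convergence then yields strong convergence $a(\cdot, Xu+\lambda Xv)\to a(\cdot, Xu+\lambda_0 Xv)$ in $L^{p'}(\Omega;\R^m)$, and pairing with $Xw\in L^p(\Omega;\R^m)$ gives the required continuity. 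With all hypotheses verified, \cite[Theorem 26.A]{Z2A} supplies existence, concluding the proof.
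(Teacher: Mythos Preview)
The paper does not supply a proof for this theorem; it is stated as a direct citation of \cite[Theorem 26.A]{Z2A}, and the authors rely on that reference without further argument. Your proposal correctly fills in the verification that $A$ satisfies the hypotheses of the Browder--Minty theorem (boundedness, hemicontinuity, strict monotonicity, coercivity) in the present functional setting, and your derivation of the a priori bound is exactly the standard one; this is precisely the justification the citation is meant to encapsulate.
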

\begin{theorem}[{\cite[Theorem 30.A]{Z2A}}]
\label{existence}
Let $a \in \mathcal{M}_{\Omega \times (0,T)}(\alpha,\beta,p)$ (or, equivalently, $a \in \tilde{\mathcal{M}}_{\Omega \times (0,T)}(\alpha,\beta',p)$) and define $\mathcal{A} u := \mathrm{div}_X (a (x, t, Xu (x)))$.
Then, for every $f \in \mathcal{V}'$ and for every $\varphi \in H$ there exists a unique solution $u\in\mathcal{W}$ to problem \eqref{prob}.
Moreover, for $a \in \mathcal{M}_{\Omega \times (0,T)}(\alpha,\beta,p)$, there exists a positive constant $c$, depending only on $\alpha, \beta$ and $p$, such that
\begin{align*}
\| u \|_{\mathcal{W}} \leqslant c 
	\Big[ \| f \|_{\mathcal{V}'} + \big( 1 + \| f \|_{\mathcal{V}'}^{\frac{1}{p-1}} + \| \varphi \|_{H}^{\frac{2}{p}} \big)^{p-2} \big( \| f \|_{\mathcal{V}'}^{\frac{1}{p-1}} + \| \varphi \|_{H}^{\frac{2}{p}} \big) \Big] \,.
\end{align*}
\end{theorem}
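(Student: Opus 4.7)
The classical Browder--Lions existence theorem for parabolic evolution equations with a monotone principal part applies here verbatim: the only structural features it requires are that $V=W^{1,p}_{X,0}(\Omega)$ is a reflexive separable Banach space, that $V\subset H\subset V'$ forms a dense continuous Gelfand triple (which is \eqref{immersioni}), and that the induced operator $\mathcal{A}\colon\mathcal{V}\to\mathcal{V}'$ is bounded, hemicontinuous, strictly monotone and coercive. These last four properties follow from Definition \ref{M,alpha,beta}: condition $(iii)'$ of Remark \ref{chebelfreschetto} provides a H\"older estimate with the correct exponent $1/(p-1)$ so that $a(\cdot,\cdot,Xu)\in L^{p'}(\Omega\times(0,T);\R^m)$ and $\mathcal{A}u\in\mathcal{V}'$; monotonicity $\langle \mathcal{A}u-\mathcal{A}v,u-v\rangle \geq \alpha\,\|X(u-v)\|_{L^p}^p$ is condition $(ii)$; coercivity follows from $(i)$ and $(ii)$ by setting $\eta=0$ and invoking Proposition \ref{PoincareW1p0}.

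The proof scheme I would then implement is Galerkin approximation. Fix a Galerkin basis $(w_k)_k$ of $V$ (available since $V$ is separable) and a sequence $\varphi_n\to\varphi$ in $H$ with $\varphi_n\in\mathrm{span}(w_1,\dots,w_n)$, and look for $u_n(t)=\sum_{k=1}^n c_{n,k}(t)\,w_k$ satisfying
\[
(u_n'(t),w_k)_H+\langle A(t)u_n(t),w_k\rangle_{V'\times V}=\langle f(t),w_k\rangle_{V'\times V},\quad u_n(0)=\varphi_n,
\]
for $k=1,\dots,n$. Carath\'eodory's ODE theorem yields a local solution, and testing with $u_n$, integrating in $t$, using coercivity and Young's inequality produces the global a priori bound
\[
\tfrac{1}{2}\|u_n(t)\|_H^2+\tfrac{\alpha}{2}\int_0^t\|Xu_n\|_{L^p}^p\,ds\leq C\big(\|\varphi\|_H^2+\|f\|_{\mathcal{V}'}^{p'}\big),
\]
so $(u_n)$ is bounded in $\mathcal{V}\cap L^\infty(0,T;H)$; the growth estimate $(iii)'$ then bounds $(\mathcal{A}u_n)$, hence $(u_n')=(f-\mathcal{A}u_n)$, in $\mathcal{V}'$. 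The explicit quantitative inequality in the statement is obtained by tracking these constants: the $\|f\|_{\mathcal{V}'}^{1/(p-1)}$ and $\|\varphi\|_H^{2/p}$ originate in the coercivity bound for $\|u\|_{\mathcal{V}}$, while the prefactor $\big(1+\|f\|_{\mathcal{V}'}^{1/(p-1)}+\|\varphi\|_H^{2/p}\big)^{p-2}$ records the growth rate $(p-2)/(p-1)$ in $(iii)'$ when estimating $\|\mathcal{A}u\|_{\mathcal{V}'}$.

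By reflexivity and the compact embedding $\mathcal{W}\hookrightarrow L^p(0,T;H)$ of Proposition \ref{propW}, I extract a subsequence with $u_n\rightharpoonup u$ in $\mathcal{V}$, $u_n'\rightharpoonup u'$ in $\mathcal{V}'$, $u_n\to u$ in $\mathcal{H}$, $u_n(T)\rightharpoonup u(T)$ in $H$, and $\mathcal{A}u_n\rightharpoonup\chi$ in $\mathcal{V}'$; linear passage to the limit in the Galerkin identity gives $u'+\chi=f$ in $\mathcal{V}'$ and $u(0)=\varphi$ in $H$. The \emph{main obstacle} is the nonlinear identification $\chi=\mathcal{A}u$, which I would resolve by Minty's monotonicity trick: for any $v\in\mathcal{V}$,
\[
0\leq\int_0^T\langle \mathcal{A}u_n-\mathcal{A}v,u_n-v\rangle_{V'\times V}\,dt,
\]
and the integration by parts formula of Proposition \ref{propW} applied to $u_n$ yields $\int_0^T\langle u_n',u_n\rangle\,dt=\tfrac12\big(\|u_n(T)\|_H^2-\|\varphi_n\|_H^2\big)$, whose $\liminf$ is controlled by weak lower semicontinuity of $\|\cdot\|_H$ on the terminal trace; testing with $v=u\pm\lambda w$, dividing by $\lambda$ and letting $\lambda\to 0^+$, the hemicontinuity of $\mathcal{A}$ forces $\chi=\mathcal{A}u$. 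Uniqueness is immediate from strict monotonicity applied to the difference of two solutions combined with the same integration by parts formula, and the quantitative estimate transfers from $(u_n)$ to $u$ by weak lower semicontinuity of the norm.
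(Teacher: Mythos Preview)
The paper does not prove this theorem; it is stated as a citation of \cite[Theorem 30.A]{Z2A} and used as a black box. Your Galerkin/Minty sketch is precisely the standard argument Zeidler gives, and the structural hypotheses you isolate (reflexive separable $V$, Gelfand triple \eqref{immersioni}, boundedness/hemicontinuity/monotonicity/coercivity of $\mathcal{A}$ from Definition \ref{M,alpha,beta} and Proposition \ref{PoincareW1p0}) are exactly what is needed to invoke it, so your proposal is correct and aligned with the intended reference.
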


Given $(a_h)_h \subset \mathcal{M}_{\Omega}(\alpha,\beta,p)$ and $a \in \mathcal{M}_{\Omega}(\alpha',\beta',p)$, for some positive constants $\alpha\leq\beta$, $\alpha'\leq\beta'$, denote
\begin{equation}\label{operatori_ellittici}
\begin{split}
    A_h : V \to V'\,,&\quad A_h u := \mathrm{div}_X (a_h (x, Xu (x)))\,,\\
    A : V \to V'\,,&\quad A u := \mathrm{div}_X (a (x, Xu (x)))\,.
\end{split}
\end{equation}
Fix $g \in V'$ and let $u_h, u \in V$ be, respectively, the unique solutions to
\begin{align}
\label{E_h}\tag{$E_h$}
A_h u &= g	\quad \text{in }{V}'\,,\\
\label{E}\tag{$E$}
A u &= g \quad \text{in }{V}' .
\end{align}
\begin{definition}
We say that $A_h$ $\g$-converges to $A$ if for every $g \in V'$
\begin{align*}
    u_h \to u\quad &\text{strongly in }	L^p (\Omega)\,,\\
    a_h (\cdot, Xu_h) \to a (\cdot, Xu) \quad  &\text{weakly in }	L^{p'} (\Omega ; \R^m) \,.
\end{align*}
\end{definition}
In a similar way, let $(a_h)_h \subset \mathcal{M}_{\Omega\times(0,T)}(\alpha,\beta,p)$ and $a \in \mathcal{M}_{\Omega\times(0,T)}(\alpha',\beta',p)$, for some positive constants $\alpha\leq\beta$, $\alpha'\leq\beta'$, and denote
\begin{equation}\label{operatori}
\begin{split}
    A_h (t) : V \to V'\,,\quad&	A_h (t) u := \mathrm{div}_X (a_h (x, t, Xu (x)))\,,\\
    A (t) : V \to V'\,,\quad&	A (t) u := \mathrm{div}_X (a (x, t, Xu (x)))\,,\\
    \mathcal{A}_h : \mathcal{V} \to \mathcal{V}'\,,\quad&	\mathcal{A}_h u := \mathrm{div}_X (a_h (x, t, Xu (x,t)))\,,\\
    \mathcal{A} : \mathcal{V} \to \mathcal{V}'\,,\quad&	\mathcal{A} u := \mathrm{div}_X (a (x, t, Xu (x,t)))\,,\\
    \mathcal{P}_h : \mathcal{W} \to \mathcal{V}'\,,\quad&	\mathcal{P}_h u := u' + \mathcal{A}_h u\,,\\
    \mathcal{P} : \mathcal{W} \to \mathcal{V}'\,,\quad&	\mathcal{P} u := u' + \mathcal{A} u\,.
\end{split}
\end{equation}

\noindent
Fix $f \in \mathcal{V}'$ and $\varphi\in H$, and let $u_h,u \in \mathcal{W}$ be, respectively, the unique solutions to
\begin{align}
\label{P_h}
\begin{cases}\tag{$P_h$}
\mathcal{P}_h u = f	&	\text{ in }	\mathcal{V}'	\\
u(0) = \varphi		&	\text{ in }	H
\end{cases}\,,\\
\label{P_infty}
\begin{cases}\tag{$P$}
\mathcal{P} u = f	&	\text{ in } \mathcal{V}'	\\
u(0) = \varphi		&	\text{ in } H
\end{cases}
\, .
\end{align}
\begin{definition}
We say that $\mathcal{P}_h$ $\g$-converges to $\mathcal{P}$ if for every $f \in \mathcal{V}'$ and $\varphi \in H$
\begin{align*}
    u_h\to u\quad&\text{strongly in }L^p(0,T;H)\,,\\
    a_h (\cdot,\cdot, Xu_h)\to a(\cdot,\cdot, Xu)\quad&\text{weakly in }	L^{p'}(0,T;L^{p'}(\Omega;\R^m))\,.
\end{align*}
\end{definition}
We conclude this section with a result that will be useful in the proof of Lemma \ref{lemmafinale}. The proof is left to the reader.
\begin{remark}
\label{ultimanota}
Consider the sequence of problems \eqref{E_h} and denote by $u_h (g)$ their solutions.
Suppose that $A_h$ $G$-converges to $A$. Then, it holds that
\begin{align*}
    u_h (g_h)\to u(g)\quad&\text{strongly in }L^p (\Omega)\,,\\
    a_h (\cdot, Xu_h (g_h))\to a(\cdot, Xu(g))\quad&\text{weakly in }L^{p'}(\Omega ; \R^m)\,, 
\end{align*}
provided that $(g_h)_h \subset V'$ strongly converges to $g$ in $V'$. \\
The analogous holds for problems \eqref{P_h} where one considers two sequences of data $(f_h)_h$ and $(\varphi_h)_h$, provided that
$f_h \to f$ strongly in $\mathcal{V}'$ and $\varphi_h \to \varphi$ strongly in $H$.
\end{remark}
\section{Elliptic \g-convergence}\label{elliptic}
In this section we state and prove a $G$-compactness result in the elliptic case, namely Theorem \ref{MainTh_1}.
In all this section we always assume that $\Omega$ is a bounded domain of $\mathbf{R}^n$, that $2\leq p<\infty$ and that $X$ satisfies conditions (H1), (H2), (H3) and (LIC), given in the Introduction.
\begin{theorem}
\label{comp_comp}
Let $v_h,v \in V$ and $M_h,M \in L^{p'}(\Omega; \R^m)$ satisfy, respectively,
\begin{align}\notag
v_h \to v \quad&\text{weakly in }V\,,\\
\label{convmom}
M_h \to M \quad&\text{weakly in }L^{p'}(\Omega;\R^m)
\end{align}
and assume that
\begin{equation}\label{mo'mifacciounadoccia}
\begin{split}
\text{\rm div}_X \, M_h = g \quad \text{in } V'\text{ for some }g \in V'.
\end{split}
\end{equation}
Then,
\[
(M_h ,X v_h)_{\R^m} \to (M, Xv)_{\R^m}\quad\text{in }\mathcal{D}'(\Omega) \, .
\]
\end{theorem}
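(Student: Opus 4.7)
The plan is to adapt Spagnolo's classical duality argument for compensated compactness. Fix $\varphi \in \mathbf{C}_c^\infty(\Omega)$; the goal is to show $\int_\Omega \varphi\,(M_h, X v_h)_{\R^m}\,dx \to \int_\Omega \varphi\,(M, Xv)_{\R^m}\,dx$.

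First I would observe that $\varphi v_h \in V$ and that, since each $X_j$ is a first-order linear differential operator with Lipschitz coefficients and $\varphi$ is smooth, the Leibniz rule $X_j(\varphi v_h) = \varphi\, X_j v_h + v_h\, X_j\varphi$ holds; in particular, multiplication by $\varphi$ is a bounded linear operator on $V$. Testing \eqref{mo'mifacciounadoccia} against $\varphi v_h$ and expanding $X(\varphi v_h)$ by the product rule gives, up to a sign dictated by the convention adopted for $\mathrm{div}_X$,
\[
\langle g, \varphi v_h\rangle_{V'\times V} = \int_\Omega \varphi\,(M_h, X v_h)_{\R^m}\,dx + \int_\Omega v_h\,(M_h, X\varphi)_{\R^m}\,dx.
\]

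Next I would pass to the limit in the two terms on the right. Since $v_h \rightharpoonup v$ weakly in $V$ and multiplication by $\varphi$ is bounded linear on $V$, one has $\varphi v_h \rightharpoonup \varphi v$ weakly in $V$, whence $\langle g, \varphi v_h\rangle \to \langle g, \varphi v\rangle$ because $g$ does not depend on $h$. For the second integral, Theorem \ref{immersion} promotes the weak convergence $v_h \rightharpoonup v$ in $V$ to strong convergence $v_h \to v$ in $L^p(\Omega)$; combined with \eqref{convmom} and the boundedness of $X\varphi$, the standard strong-times-weak argument yields $\int_\Omega v_h\,(M_h, X\varphi)_{\R^m}\,dx \to \int_\Omega v\,(M, X\varphi)_{\R^m}\,dx$. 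Passing to the limit in the defining relation of $\mathrm{div}_X M_h = g$ against any fixed $w \in V$ also gives $\mathrm{div}_X M = g$ in $V'$, so applying the same product-rule expansion with $(M,v)$ in place of $(M_h, v_h)$ identifies the limit as $\int_\Omega \varphi\,(M, Xv)_{\R^m}\,dx$, as desired.

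The potential obstacle one might fear -- a full div--curl lemma requiring an intrinsic $\mathrm{curl}_X$ -- is avoided entirely: only the Leibniz rule for the $X_j$'s (granted by Lipschitz regularity of their coefficients) and the Rellich-type compactness $V \Subset L^p(\Omega)$ are used, and both are available under (H1)--(H3). Consequently, the Euclidean duality argument transfers to the $X$-gradient framework with essentially no changes, which is precisely the conceptual point the authors emphasize in the Introduction.
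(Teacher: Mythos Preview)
Your proposal is correct and follows essentially the same route as the paper's proof: test $\mathrm{div}_X M_h=g$ against $\varphi v_h$, split via the Leibniz rule for $X$, use Theorem~\ref{immersion} to upgrade $v_h\rightharpoonup v$ in $V$ to strong convergence in $L^p(\Omega)$ for the weak--strong product term, and identify the limit through $\mathrm{div}_X M=g$. The only cosmetic difference is that you justify $\langle g,\varphi v_h\rangle\to\langle g,\varphi v\rangle$ via weak convergence of $\varphi v_h$ in $V$, whereas the paper leaves this step implicit.
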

\begin{proof}
Fix $\varphi \in C^{\infty}_c (\Omega)$ and consider the quantities $\langle\text{\rm div}_X M_h,\varphi\rangle_{V'\times V}$ and $\langle\text{\rm div}_X M_h,v_h\varphi\rangle_{V'\times V}$.
By \eqref{convmom} and \eqref{mo'mifacciounadoccia}, we get
\begin{align}
\label{divergenza}
\text{\rm div}_X M = g  \hskip10pt \textrm{in } {\mathcal D}'(\Omega)\,.
\end{align}
Moreover,
\begin{align*}
    \int_{\Omega} (M_h, X v_h)_{\R^m}\varphi \, dx=\langle \text{\rm div}_X M_h, v_h \varphi \big\rangle_{V'\times V} - \int_{\Omega}  (M_h, X \varphi)_{\R^m} v_h \, dx\,.
\end{align*}
Consider the right hand side terms. By assumptions we have that
\[
\lim_{h \to +\infty} \big\langle \text{\rm div}_X M_h, v_h \varphi \big\rangle_{V'\times V} = \big\langle g, v \varphi \big\rangle_{V'\times V} 
\]
and, since $(v_h)_h$ strongly converges to $v$ in $L^p (\Omega)$ (see Theorem \ref{immersion}), we get
\[
\lim_{h \to +\infty} {\displaystyle \int_{\Omega}}  (M_h, X \varphi)_{\R^m} v_h \, dx
\int_{\Omega} (M, X \varphi)_{\R^m} v \, dx \,,
\]
that is,
\begin{align*}
\lim_{h \to +\infty}\int_{\Omega} (M_h, X v_h)_{\R^m} \varphi \, dx = \big\langle g , v \varphi \big\rangle_{V'\times V} - \int_{\Omega} (M, X \varphi)_{\R^m} v \, dx  \, .
\end{align*}
Then, the thesis follows by \eqref{divergenza}.
\end{proof}

\begin{lem}
\label{lem6.5_ell}
Fix $g \in V'$ and denote by $u_{h} (g)$ the solution to problem \eqref{E_h}.
Then, there exist two continuous operators
\begin{align*}
B: V' &\to V\,,\\
M: V' &\to L^{p'} (\Omega ; \R^m)
\end{align*}
such that, up to subsequences, the following convergences hold
\begin{align*}
    u_{h} (g)\to B(g)\quad&\text{strongly in }L^p(\Omega)\,,\\
    a_h (\cdot, X u_{h}(g))\to M (g)\quad&\text{weakly in }L^{p'}(\Omega;\R^m)\,.
\end{align*}
Moreover, $B$ is invertible and $M$ satisfies
\begin{align}\label{annina}
\text{\rm div}_X M (g) = g	\quad\textit{in } V'\quad\text{for every }g \in V'\,,
\end{align}
\begin{equation}\label{conditionM}
\begin{split}
    | M(f) - M(g) | &\leq \beta' (1 + (M(f), X B(f))_{\mathbf{R}^m} + (M(g), X B(g))_{\mathbf{R}^m}) ^{\frac{p-2}{p - 1}} \\
    &\quad \times| X B(f) - X B (g) |^{\frac{1}{p-1}}
\end{split}
\end{equation}
for every $f,g \in V'$, with $\beta' = \big( \beta \, \alpha^{-\frac{1}{p}} \max\{ 1 , \alpha^{-\frac{p-2}{p}} \} \big)^{\frac{p}{p-1}}$.
\end{lem}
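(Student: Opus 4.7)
The plan is to construct $B$ and $M$ by diagonal extraction on a countable dense $\{g_k\}_k\subset V'$ and then extend by continuity using \eqref{conditionM}. From Theorem \ref{existencee}, $\|u_h(g)\|_V\leq\alpha^{-1/(p-1)}\|g\|_{V'}^{1/(p-1)}$ uniformly in $h$, and the growth $(iii)$ taken at $\eta=0$ (combined with $a_h(\cdot,0)=0$) bounds $a_h(\cdot,Xu_h(g))$ in $L^{p'}(\Omega;\R^m)$. A diagonal extraction yields a subsequence along which, for every $k$, $u_h(g_k)\rightharpoonup B(g_k)$ weakly in $V$ (hence strongly in $L^p(\Omega)$ by Theorem \ref{immersion}) and $a_h(\cdot,Xu_h(g_k))\rightharpoonup M(g_k)$ weakly in $L^{p'}(\Omega;\R^m)$. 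Testing $A_hu_h(g_k)=g_k$ against $\phi\in V$ and passing to the weak limit yields \eqref{annina}.

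The core technical step is \eqref{conditionM}, proved via the pointwise inequality $(iii)'$ from Remark \ref{chebelfreschetto} combined with Lemma \ref{chiado'} with exponents $\vartheta_1=(p-2)/(p-1)$ and $\vartheta_2=1/(p-1)$, whose sum equals $1$. The linchpin is Theorem \ref{comp_comp}: since the $X$-divergence of $M_h:=a_h(\cdot,Xu_h(g_k))$ equals $g_k$, independent of $h$, compensated compactness identifies the $\mathcal{D}'$-limit of the pairing $(a_h(\cdot,Xu_h(g_k)),Xu_h(g_k))_{\R^m}$ as $(M(g_k),XB(g_k))_{\R^m}$; combined with the monotonicity lower bound $|Xu_h(g_k)|^p\leq\alpha^{-1}(a_h(\cdot,Xu_h(g_k)),Xu_h(g_k))_{\R^m}$ from $(ii)$ at $\eta=0$, this matches the $\mathcal{D}'$-limit of the first factor in $(iii)'$ with the bracket in \eqref{conditionM}, the constants $\alpha^{-1}$ being absorbed into $\beta'$. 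An analogous application of Theorem \ref{comp_comp} with $M_h:=a_h(\cdot,Xu_h(f))-a_h(\cdot,Xu_h(g))$ (with $h$-independent divergence $f-g$) and $v_h:=u_h(f)-u_h(g)$ controls the difference factor and yields $|XB(f)-XB(g)|^{1/(p-1)}$ after algebraic rearrangement exploiting $(ii)$. To satisfy the hypotheses of Lemma \ref{chiado'} despite the vector-valued nature of the left-hand side, the inequality is treated componentwise, using $s_h^{(j)}:=(a_h(\cdot,Xu_h(f))-a_h(\cdot,Xu_h(g)))_j$, for which $s_h^{(j)}\to(M(f)-M(g))_j$ weakly in $L^{p'}(\Omega)$ and therefore in $\mathcal{D}'(\Omega)$.

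Once \eqref{conditionM} is established on $\{g_k\}$, it provides a modulus of continuity of $M$ in terms of $B$; together with the $V'$-to-$V$ a priori bound, it forces $B$ and $M$ to be uniformly continuous on bounded subsets of $V'$, so they extend continuously to all of $V'$ by density, preserving \eqref{annina} and \eqref{conditionM}. Invertibility of $B$: injectivity is immediate from \eqref{conditionM}, since $B(f)=B(g)$ forces $M(f)=M(g)$ and thus $f=\mathrm{div}_XM(f)=\mathrm{div}_XM(g)=g$ by \eqref{annina}; surjectivity follows from a Browder--Minty argument applied to the limit operator $v\mapsto\mathrm{div}_XM(B^{-1}(v))$, which inherits monotonicity and coercivity from the class $\mathcal{M}_{\Omega}(\alpha,\beta,p)$.

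The main obstacle is the precise identification of the limit in the difference factor as exactly $|XB(f)-XB(g)|^{1/(p-1)}$, and not as a possibly larger weak limit of $|Xu_h(f)-Xu_h(g)|^{1/(p-1)}$: a naive application of Lemma \ref{chiado'} with $r_{2,h}=|Xu_h(f)-Xu_h(g)|$ only produces a weak $L^p$-limit that may strictly exceed $|XB(f)-XB(g)|$ by lower semicontinuity. The resolution is to recast the pertinent quantities as bilinear pairings $(\cdot,\cdot)_{\R^m}$ with divergence-constrained first factor, to which Theorem \ref{comp_comp} applies, and only then to extract the sharp exponent $1/(p-1)$ algebraically.
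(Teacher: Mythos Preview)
Your core mechanism is the same as the paper's: bound $u_h(g)$ in $V$ and $a_h(\cdot,Xu_h(g))$ in $L^{p'}$, extract weak limits, pass to the limit in the equation to get \eqref{annina}, and derive \eqref{conditionM} by writing the pointwise inequality from $(iii)$ with the factors $|Xu_h|^p$ and $|Xu_h(f)-Xu_h(g)|$ replaced, via $(ii)$, by the pairings $(a_h,Xu_h)_{\R^m}$ and $(a_h(f)-a_h(g),Xu_h(f)-Xu_h(g))_{\R^m}^{1/p}$, then invoking Theorem \ref{comp_comp} and Lemma \ref{chiado'}, and finally Cauchy--Schwarz to isolate $|M(f)-M(g)|$. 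Your explicit diagonal extraction on a countable dense set, followed by extension through the uniform $V'$-to-$V$ modulus $\|u_h(f)-u_h(g)\|_V\le(\alpha^{-1}\|f-g\|_{V'})^{1/(p-1)}$, is a welcome sharpening: the paper extracts a subsequence ``for each $g$'' without making the single-subsequence issue explicit.

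The one genuine gap is your surjectivity argument. The Browder--Minty theorem requires the operator $v\mapsto\mathrm{div}_X M(B^{-1}(v))$ to be defined on all of $V$, but a priori $B^{-1}$ lives only on $B(V')$; invoking Browder--Minty to enlarge its domain is circular. The paper does not prove surjectivity at all: it shows instead that $B(V')$ is \emph{dense} in $V$ via a duality argument --- if $g_0\in V'$ satisfies $\langle g_0,B(g)\rangle=0$ for every $g$, then taking $g=g_0$ and using $\langle g_0,B(g_0)\rangle=\lim_h\langle A_hu_h(g_0),u_h(g_0)\rangle\ge\alpha\liminf_h\|u_h(g_0)\|_V^p$ forces $B(g_0)=0$, hence $g_0=0$ by injectivity --- and then extends $B^{-1}$ continuously to $V$. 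This extended inverse is what is actually used downstream in Theorem \ref{MainTh_1}; genuine surjectivity of $B$ onto $V$ is neither proved nor needed at this stage.
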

\begin{proof}
By Theorem \ref{existencee}, the sequence $(u_h (g))_h$ is bounded in $V$ and then, up to a subsequence, $(u_h (g))_h$ weakly converges in $V$. By Theorem \ref{immersion}, and up to a further subsequence, it strongly converges in $L^p(\Omega)$.

Fix $g \in V'$ and define
\begin{align*}
B(g) := \lim_{h\to\infty} u_h (g)\,.
\end{align*}
Since, by Definition \ref{M,alpha,beta} (iii), 
\[
| a_h (x, X u_h(g)) | \leq \beta\left[1+| X u_h(g) |^p \right]^\frac{p-2}{p} | X u_h(g) | \leq \beta\left[1+| X u_h(g) |^p \right]^\frac{p-1}{p}
\]
a.e. $x\in\Omega$, then
\begin{align*}
    \int_{\Omega} | a_h (x, X u_h(g)) |^{\frac{p}{p-1}} \, dx\leq \beta^{\frac{p}{p-1}} | \Omega| + \beta^{\frac{p}{p-1}} \int_{\Omega} | X u_h(g) |^p \, dx \,,
\end{align*}
that is, the sequence $(a_h (\cdot, X u_h (g)))_h$ turns out to be bounded in $L^{p'} (\Omega ; \R^m)$.
Therefore, for every $g \in V'$ and up to a subsequence, there exists a limit $M(g) \in L^{p'} (\Omega ; \R^m)$ such that $(a_h (\cdot, X u_h (g)))_h$ weakly converges to $M(g)$ in $L^{p'} (\Omega ; \R^m)$.
Moreover, since
\begin{align*}
\langle g, \varphi \rangle_{V' \times V} = \langle A_h u_h (g), \varphi \rangle_{V' \times V} = \int_{\Omega} \big( a_h (x, X u_h(g)), X \varphi \big)_{\mathbf{R}^m} \, dx
\end{align*}
for any $\varphi \in C^{\infty}_c (\Omega)$ by hypotheses, then
\begin{align*}
\int_{\Omega} \big( M(g), X \varphi \big)_{\mathbf{R}^m} \, dx = \langle g, \varphi \rangle_{V' \times V}\,,
\end{align*}
that is, \eqref{annina} holds.

Fix $f, g \in V'$. By Definition \ref{M,alpha,beta}, it holds that
\begin{align*}
    |a_h(x,Xu_h(f)) &- a_h(x, Xu_h(g))|\leq		\beta (1+ | Xu_h(f)|^p + |Xu_h(g)|^p )^{\frac{p-2}{p}}\\
    &\quad\times|X u_h(f) - Xu_h(g)|\\
    &\leq \beta\alpha^{-1/p} ( 1+ \alpha^{-1} (a_h(x,X u_h (f)), Xu_h(f))_{\mathbf{R}^m} \\
    &\quad+ \alpha^{-1} (a_h (x, Xu_h(g)), Xu_h(g)))_{\mathbf{R}^m} )^{\frac{p-2}{p}} \\
    &\quad\times  (a_h (x, Xu_h(f)) - a_h(x,Xu_h(g)), Xu_h(f) -  Xu_h(g))_{\mathbf{R}^m}^{1/p}
\end{align*}
a.e. $x\in\Omega$ and, letting $h\to\infty$, we get
\begin{align*}
    | M(f) - M(g) | &\leq \beta\alpha^{-1/p} \max\{ 1 , \alpha^{-\frac{p-2}{p}} \} \big( 1 + (M(f), X B(f))_{\mathbf{R}^m}\\
    &\quad+ (M(g), X B(g))_{\mathbf{R}^m} \big)^{\frac{p-2}{p}}	\\
	&\quad\times\big( M(f) - M(g), X B(f) - X B (g) \big)_{\mathbf{R}^m}^{1/p}\,,
\end{align*}
by means of Lemma \ref{chiado'} and Theorem \ref{comp_comp}. Thus, \eqref{conditionM} follows by Cauchy-Schwarz inequality.

We conclude by showing the invertibility of $B$.
Fix $f, g \in V'$ and assume that $B (f) = B (g)$.
By \eqref{conditionM}, it holds that $M (f) = M(g)$ and, by \eqref{annina}, we conclude that $f = g$.
Moreover, $B (V')$ is dense in $V$. In fact, if $g_o \in V'$ satisfies
\[
\langle g_o , B(g) \rangle_{V' \times V} = 0 \quad \text{for every } g \in V'
\]
then, in particular, $\langle g_o , B(g_o) \rangle_{V' \times V} = 0$ and, by Definition \ref{M,alpha,beta} (ii) and the lower semicontinuity of the norm $\|\cdot\|_V$, we get
\begin{align*}
    0 & = \langle g_o , B(g_o) \rangle_{V' \times V} = \langle g_o , \lim_{h\to\infty}u_h(g_o) \rangle_{V' \times V}\\
	& = \lim_{h \to +\infty} \langle A_h u_h (g_o) , u_h (g_o) \rangle_{V' \times V}\\
	& \geq \liminf_{h \to +\infty} \alpha \| u_h (g_o) \|_V^p =  \alpha \| B (g_o) \|_V^p \geq0\,.
\end{align*}
The conclusion follows by the injectivity of $B$ and since $B(0) = 0$.

The operator $B^{-1} : B(V') \to V'$ can be uniquely extended to an operator $B^{-1}:V'\to V$, by the density of $B(V')$ in $V$.
\end{proof}

\begin{theorem}\label{MainTh_1}
Consider a sequence $(a_h)_h\subset\mathcal{M}_{\Omega}(\alpha,\beta,p)$ and the related sequence of elliptic operators $(A_h)_{h}$, defined in \eqref{operatori_ellittici}.

\noindent Then, there exists $a \in \tilde{\mathcal{M}}_{\Omega}(\alpha, \beta', p)$,
with $\beta' = \left( \alpha^{-1} \beta^p \right)^{\frac{1}{p-1}}  \max \{ 1 , \alpha^{-\frac{p-2}{p-1}} \}$, such that, up to subsequences,
\[
A_h\quad \g\text{-converges to } A\,,
\]
where $A:V\to V'$ is the operator in $X$-divergence form associated to $a$ and defined by
\[
A(u)=\mathrm{div}_X (a (\cdot, Xu ))\quad\text{for any }u\in V\,.
\]
\end{theorem}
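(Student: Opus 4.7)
The plan is to combine the abstract representation of Lemma \ref{lem6.5_ell} with the Euclidean affine functions $\ell_\xi(x):=\xi\cdot x$, $\xi\in\mathbf{R}^n$, and to use condition (LIC) to reach every target in $\mathbf{R}^m$. First, applying Lemma \ref{lem6.5_ell} by a diagonal extraction along a countable dense subset of $V'$ and invoking the continuity of the resulting $B$ and $M$, one obtains, along a single subsequence, globally defined operators $B:V'\to V$ (bijective) and $M:V'\to L^{p'}(\Omega;\mathbf{R}^m)$ satisfying \eqref{annina} and \eqref{conditionM}.

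For the construction of the candidate nonlinearity $a$, fix $\xi\in\mathbf{R}^n$ and $\varphi\in\mathbf{C}_c^\infty(\Omega)$, and set $\ell_{\xi,\varphi}(x):=\varphi(x)\,(\xi\cdot x)\in V$ together with $g_{\xi,\varphi}:=B^{-1}(\ell_{\xi,\varphi})\in V'$. On any open set $\omega$ where $\varphi\equiv 1$ one has $XB(g_{\xi,\varphi})(x)=X\ell_\xi(x)=C(x)\xi$ for a.e.\ $x\in\omega$, and the natural definition is
\[
a\bigl(x,C(x)\xi\bigr):=M(g_{\xi,\varphi})(x),\qquad\text{a.e.\ }x\in\omega.
\]
By (LIC) the $m\times n$ matrix $C(x)$ has full row rank for a.e.\ $x$, so $\xi\mapsto C(x)\xi$ is surjective onto $\mathbf{R}^m$; letting $\varphi$ exhaust $\Omega$ and varying $\xi$, this defines $a(x,\eta)$ for a.e.\ $x$ and every $\eta\in\mathbf{R}^m$. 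Independence of the specific preimage of $\eta$ under $C(x)$ and of the cutoff $\varphi$ is immediate from \eqref{conditionM}, which forces $M(f)=M(g)$ wherever $XB(f)=XB(g)$; the Carath\'eodory character of $a$ is inherited from the Lipschitz regularity of $C$ together with the same estimate.

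One then checks that $a\in\tilde{\mathcal{M}}_\Omega(\alpha,\beta',p)$ by passing to the limit in the defining properties of $a_h$. The compensated compactness Theorem \ref{comp_comp} gives meaning to the inner products $(a_h(\cdot,Xu_h(f)),Xu_h(f))_{\mathbf{R}^m}$ as distributional limits, and Lemma \ref{chiado'}, applied to the Cauchy--Schwarz-type bound already used in the proof of Lemma \ref{lem6.5_ell}, transfers the monotonicity $(ii)$ and the growth bound $(iii)'$ from $a_h$ to $a$; the constants are exactly those exhibited in \eqref{conditionM}.

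The hardest step is the identification $M(g)=a(\cdot,XB(g))$ for arbitrary $g\in V'$, which is what is needed to close the $G$-convergence. The strategy is to approximate $XB(g)\in L^p(\Omega;\mathbf{R}^m)$ by functions of the form $x\mapsto C(x)\xi_k(x)$, with $\xi_k$ piecewise constant on a finite partition of $\Omega$ subordinate to cutoffs $\varphi_j$ equal to $1$ on each piece; on each patch the construction above identifies $a(\cdot,C(\cdot)\xi_k)$ with some $M(g_{\xi_k,\varphi_j})$, and \eqref{conditionM} provides the modulus of continuity necessary to pass to the limit in $L^{p'}(\Omega;\mathbf{R}^m)$. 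The delicate point, and the real obstacle, is to match the nonlocal operator $B^{-1}$ with this local Euclidean-affine construction, controlling the transitions between patches through \eqref{conditionM} and the surjectivity of $C(x)$ granted by (LIC); this interplay is precisely where the simplification over the linear argument of \cite{MPSC2} arises.
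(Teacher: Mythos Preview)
Your outline follows the paper's proof closely: invoke Lemma~\ref{lem6.5_ell}, build $a$ from cut-off Euclidean affine functions using (LIC) for the surjectivity of $C(x)$, and transfer the $\tilde{\mathcal M}$ estimates via Theorem~\ref{comp_comp} and Lemma~\ref{chiado'}. The only place you diverge is your ``hardest step'', and there you are making life harder than necessary. You have already noted that \eqref{conditionM} is a \emph{pointwise} bound forcing $M(f)(x)=M(g)(x)$ at every $x$ where $XB(f)(x)=XB(g)(x)$. That observation \emph{is} the identification: for arbitrary $g\in V'$ and a.e.\ $x\in\Omega$, (LIC) supplies some $\xi=\xi(x)\in\mathbf{R}^n$ with $C(x)\xi=XB(g)(x)$; taking any cutoff $\varphi\equiv 1$ near $x$ one has $X\ell_{\xi,\varphi}(x)=C(x)\xi=XB(g)(x)$, hence $M(g)(x)=M(g_{\xi,\varphi})(x)=a\bigl(x,XB(g)(x)\bigr)$ directly. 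No patchwork approximation of $XB(g)$ by piecewise-affine fields is needed, and in particular you never have to glue local affine pieces into a single element of $V$ or control how $B^{-1}$ behaves across patches (which would be genuinely delicate, since Lemma~\ref{lem6.5_ell} gives continuity of $B$ but not of $B^{-1}$). This is exactly how the paper proceeds in its Step~2, and it is the actual source of the simplification over the linear argument in \cite{MPSC2}.
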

\begin{proof}
Let us divide the proof of Theorem \ref{MainTh_1} in three steps.
\medskip

\noindent\textbf{Step 1.} Construction of the limit operator and useful estimates.

Let $B$ and $M$ be the operators introduced in Lemma \ref{lem6.5_ell} and define
\[
A : V \to V'\,,\quad A := B^{-1}.
\]
By \eqref{annina},
\[
\text{\rm div}_X M (g) = g = A (B (g)) \quad \text{for every } g \in V'.
\]
Moreover, define $N : V \to L^{p'}(\Omega; \R^m)$ by $N := M \circ A$.

\noindent As a consequence of Theorem \ref{comp_comp} and Lemma \ref{lem6.5_ell}, it easy to show that $N$ satisfies
\begin{equation}\label{pastaconlabottarga}
    \alpha | X v - X w |^p \leq \big\langle N (v) - N (w) ,  Xv - Xw \big\rangle_{V' \times V} \, ,
\end{equation}
\begin{equation}\label{pasta2}
    | N (v) - N (w) | \leq \beta' \big( 1 + (N(v), X v)_{\mathbf{R}^m} + (N(w), X w)_{\mathbf{R}^m} \big)^{\frac{p-2}{p - 1}} | X v - X w |^{\frac{1}{p-1}}
\end{equation}
for every $v, w \in V$. (To get an idea of how to prove \eqref{pastaconlabottarga} and \eqref{pasta2}, we remind the interested reader to the equivalent estimates in the parabolic case, namely \eqref{classM2} and \eqref{due}, provided in the proof of Theorem \ref{MainTh}.)
\medskip

\noindent\textbf{Step 2.} Let us show that $A$ in an operator in $X$-divergence form, by providing the existence of a function $a (x,\cdot) : \R^m \to \R^m$ satisfying
\begin{align}\label{lacito_e}
N (u) = a (x, Xu)\quad\text{for any }u\in V\text{ a.e. }x\in\Omega\,.
\end{align}

Fix an open set $\omega$ such that $\overline{\omega} \subset \Omega$, let $\phi \in {\bf C}^1_0 (\Omega)$ be such that $\phi \equiv 1$ in $\omega$ and, for any $\xi\in\R^n$, define
\begin{equation*}
    w_{\xi} (x) := \big( \xi, x \big)_{\mathbf{R}^n} \phi (x)\quad\text{for any }x\in\omega\,,
\end{equation*}
where $( \xi, x)_{\mathbf{R}^n} = \xi_1 x_1 + \ldots + \xi_n x_n$. Then
\begin{equation}\label{gradXaffine2}
    X w_{\xi} (x) = C(x) \xi \quad \text{for any } x \in \omega\,,
\end{equation}
where $C(x)$ is the coefficient matrix of the $X$-gradient (see Section \ref{sect2}).

\noindent Notice that, if $\xi, \tilde{\xi} \in \R^n$ are such that $\xi - \tilde{\xi} \in $ Ker$\, C(x)$, then
\begin{align}\label{serve}
N w_{\xi} (x) = N w_{\tilde\xi} (x) \quad \text{for any } x \in \omega\,.
\end{align}
Indeed, by \eqref{pasta2}, it holds that
\begin{align*}
    | N w_{\xi} (x) - N w_{\tilde\xi} (x) |  &\leq \beta' ( 1 + ( N w_{\xi} , X w_{\xi} )_{\R^m} 
	+ (N w_{\tilde\xi}, X w_{\tilde\xi} )_{\R^m})^{\frac{p-2}{p-1}}\\
	&\quad\times|X w_{\xi} - X w_{\tilde\xi} |^{\frac{1}{p-1}}
\end{align*}
and, by \eqref{gradXaffine2}, the right hand side, and then the left hand side, is zero. 

\noindent Fix $\xi \in \R^n$. By condition (LIC), there exists a Lebesgue measure zero subset of $\Omega$, $Z_X$, such that $(X_1(x),\dots,X_m(x))$ are linearly independent for any $x \in \omega \setminus Z_X$.
Denote by $\eta = \eta (x)$ the vector $C(x) \xi \in \R^m$ and define
\begin{equation*}
    \tilde{a} (x, \eta (x) ) := N w_{\xi} (x)\,.
\end{equation*}
By \eqref{serve}, $\tilde{a}$ is well-defined and, by condition (LIC), we can define $\tilde{a} (x, \cdot)$ in the whole space $\R^m$. 

\noindent Consider now a sequence of open sets $(\omega_j)_j$ such that
$\overline{\omega}_j \subset \Omega$ for every $j \in {\bf N}$ and such that $\cup_{j=1}^\infty \omega_j = \Omega$. Moreover, let $(\phi_j)_j\subset {\bf C}^1_0(\Omega)$ be such that
\[
\phi_j \equiv 1 \quad \text{on } \omega_j \quad \text{for every } j \in {\bf N},
\]
and, for any fixed $\xi\in\mathbf{R}^n$, define the maps
\begin{align*}
    w_{\xi}^j (x) &:= \big( \xi, x \big)_{\mathbf{R}^n} \phi_j  (x) \quad \text{for any } x\in\omega_j\,,\\
    a (x, \eta (x)) &:= \lim_{j \to +\infty} N w_{\xi}^j (x)\quad \text{for any } x\in \Omega \setminus Z_X\,,
\end{align*}
where $C(x) \xi = \eta$. Then, by previous considerations on $\tilde{a}$, \eqref{lacito_e} follows.
\medskip

\noindent\textbf{Step 3.} We conclude by showing that $a \in \tilde{\mathcal{M}}_{\Omega}(\alpha, \beta', p)$. 

By construction, $a (x,\cdot) : \R^m \to \R^m$ turns out to be continuous for a.e. $x \in \Omega$ and $a (\cdot, \eta) : \Omega \to {\bf R}^m$ turns out to be measurable for every $\eta \in {\bf R}^m$. Then, $a : \Omega\times\R^m \to \R^m$ is a Carath\'eodory function.
Moreover, by \eqref{pastaconlabottarga}, $a$ satisfies condition $(ii)$ of Definition \ref{M,alpha,beta} and, by \eqref{pasta2}, condition $(iii)'$ of Remark \ref{chebelfreschetto}.

\noindent Let us prove that $a (\cdot, 0) = 0$. 
Let $w_h$ be the solution to \eqref{E_h}, with $g = 0$. Since, by Definition \ref{M,alpha,beta} (i), $a_h (x, 0) = 0$ a.e. in $\Omega$ then, by the uniqueness of the solution to problem \eqref{E_h}, we get that
$w_h \equiv 0$ a.e. in $\Omega$ for every $h \in {\bf N}$ and, by Lemma \ref{lem6.5_ell} and the definition of $M$, we get (up to subsequences)
\[
M(0) = \lim_{h \to +\infty} a_h (x, X w_h) = \lim_{h \to +\infty} 0 = 0\,,
\]
that is, $N(0) = 0$.

\noindent Fix $\bar\xi = 0\in\mathbf{R}^n$. Then, $N w_{\bar\xi}^j (x) = 0$ in $\omega_j$ for any $j\in\mathbf{N}$ and, by the definition of $a$, we finally get $a(x, 0) = 0$ a.e. $x\in\Omega$.
\end{proof}


\section{Parabolic \g-convergence}\label{sec.MainResult}


The last section of this paper is devoted to the study of the $\g$-convergence of sequences of parabolic operators depending on vector fields. As done in the elliptic case (Section \ref{elliptic}), we provide in Theorem \ref{MainTh} a $G$-compactness theorem, Theorem \ref{MainTh}, which follows from preliminary results, Theorem \ref{th5.10}, Theorem \ref{div} and Lemma \ref{lem6.5}.
We conclude this section by showing in Lemma \ref{lemmafinale} that, whenever the sequence of Carath\'eodory functions $(a_h)_h$, that defines the monotone parabolic operators $(\partial_t+\mathrm{div}_X(a_h(x,X)))_h$, does not depend on $t$ for every $h\in\mathbf{N}$, then the parabolic $\g$-limit is the operator
\[
\partial_t+\mathrm{div}_X(a(x,X))\,,
\]
where $\mathrm{div}_X(a(x,X))$ is the elliptic $\g$-limit of the sequence of operators $(\mathrm{div}_X(a_h(x,X)))_h$.
In all this section we always assume that $\Omega$ is a bounded domain of $\mathbf{R}^n$, that $2\leq p<\infty$ and that $X$ satisfies conditions (H1), (H2), (H3) and (LIC), given in the Introduction.
\medskip

The first result of this section, which is proved in \cite[Lemma 3]{zko}, shows that the sequence of solutions $(u_h)_h$ to problems \eqref{P_h}, that are naturally compact in $L^p (0, T; H)$ as stated in Proposition \ref{propW}, converges to its limit in the space $\mathbf{C}^0([0,T];H)$.
\begin{theorem}[{\cite[Lemma 3]{zko}}]\label{th5.10}
Let $u_{h} \in \mathcal{W}$ be the solution to problem \eqref{P_h} and let $u$ be the limit, up to subsequences, of $(u_{h})_h$ in $L^p(0,T;H)$. Then,
\[
u_{h} \to u \quad\text{in } \mathbf{C}^0([0,T];H)\quad\text{as }h\to\infty\,.
\]
\end{theorem}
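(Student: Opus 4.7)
The plan is to combine the a priori bound from Theorem \ref{existence}, the integration by parts formula of Proposition \ref{propW}, and the monotonicity of $\mathcal{A}_h$ to produce a Minty-type energy estimate for $u_h-u$ and pass to the limit uniformly in $t$. First, I note that by Theorem \ref{existence} the sequence $(u_h)_h$ is uniformly bounded in $\mathcal{W}$, hence in $\mathbf{C}^0([0,T];H)$ by Proposition \ref{propW}; along the subsequence giving $u_h\to u$ in $L^p(0,T;H)$, a further extraction yields $u_h\rightharpoonup u$ in $\mathcal{V}$ and $u_h'\rightharpoonup u'$ in $\mathcal{V}'$, so that $u\in\mathcal{W}$, and from the weak continuity of the trace $\mathcal{W}\to H$ at $t=0$ combined with $u_h(0)=\varphi$ I would deduce $u(0)=\varphi$.

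Second, setting $v_h:=u_h-u\in\mathcal{W}$ with $v_h(0)=0$, I would apply Proposition \ref{propW} to write
\[
\|v_h(t)\|_H^2 = 2\int_0^t\langle u_h'-u',v_h\rangle_{V'\times V}\,d\tau.
\]
Substituting $u_h'=f-\mathcal{A}_h u_h$, adding and subtracting $\mathcal{A}_h u$, and discarding the nonnegative contribution $2\int_0^t\langle \mathcal{A}_h u_h-\mathcal{A}_h u, v_h\rangle\geq 2\alpha\int_0^t\|Xv_h\|_{L^p(\Omega)}^p$ coming from Definition \ref{M,alpha,beta}(ii), one obtains
\[
\|v_h(t)\|_H^2 \leq 2 I_h(t),\qquad I_h(t):=\int_0^t\langle f-u'-\mathcal{A}_h u,v_h\rangle\,d\tau,
\]
so the proof reduces to showing $I_h\to 0$ in $\mathbf{C}^0([0,T])$ via an Arzel\`a--Ascoli argument. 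Uniform equicontinuity of $(I_h)_h$ follows from H\"older,
\[
|I_h(t)-I_h(s)|\leq \|v_h\|_{\mathcal{V}}\,\|f-u'-\mathcal{A}_h u\|_{L^{p'}(s,t;V')},
\]
together with the growth estimate of Definition \ref{M,alpha,beta}(iii), which dominates $\|a_h(\cdot,Xu)\|_{L^{p'}(\Omega)}^{p'}$ by a fixed $L^1(0,T)$ function, so the right-hand side vanishes as $|t-s|\to 0$ uniformly in $h$ by absolute continuity of the Lebesgue integral.

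The main obstacle will be the pointwise convergence $I_h(t)\to 0$ for each $t$. The linear part $\int_0^t\langle f-u',v_h\rangle$ vanishes by the weak convergence $v_h\rightharpoonup 0$ in $\mathcal{V}$, but the nonlinear piece $-\int_0^t\int_\Omega (a_h(\cdot,Xu),Xv_h)_{\R^m}\,dx\,d\tau$ is a priori only a weak--weak product: the tensor $a_h(\cdot,Xu)$ is bounded but need not converge strongly in $L^{p'}(\Omega\times(0,T);\R^m)$, while $Xv_h\rightharpoonup 0$ weakly in $L^p$. This is precisely the phenomenon the compensated compactness of Theorem \ref{comp_comp} is designed to handle; in the original argument of \cite{zko} the obstacle is resolved by a compactness extraction exploiting the $X$-divergence form of $\mathcal{A}_h u$, upgrading the pairing to a weak--strong one so that the integrand converges to $0$ pointwise in $t$, which together with the equicontinuity above closes the Arzel\`a--Ascoli argument.
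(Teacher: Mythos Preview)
The paper does not prove this statement: Theorem~\ref{th5.10} is recorded as \cite[Lemma~3]{zko} and used as a black box, so there is no in-paper argument to compare against.

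Your sketch is more ambitious than what the paper offers, and most of the scaffolding is sound: the uniform $\mathcal{W}$-bound, the identification $u\in\mathcal{W}$ with $u(0)=\varphi$, the energy identity for $v_h=u_h-u$, and the equicontinuity of $I_h$ via the fixed $L^1(0,T)$ majorant coming from Definition~\ref{M,alpha,beta}(iii) are all correct. The gap, however, is real and is not closed by the tool you invoke. You reduce matters to
\[
I_h(t)=\int_0^t\big\langle f-u'-\mathcal{A}_h u,\;v_h\big\rangle_{V'\times V}\,d\tau
\]
and the only obstruction is the term $\int_0^t\langle\mathcal{A}_h u,v_h\rangle=\int_0^t\!\!\int_\Omega\big(a_h(\cdot,\cdot,Xu),Xv_h\big)_{\R^m}$. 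Theorem~\ref{comp_comp} and Theorem~\ref{div} both require the $X$-divergence of the momentum sequence to equal a datum \emph{independent of $h$}; here the momentum is $a_h(\cdot,\cdot,Xu)$ with $\mathrm{div}_X\,a_h(\cdot,\cdot,Xu)=\mathcal{A}_h u$, which genuinely varies with $h$, and there is no auxiliary function $w$ with $w'+\mathrm{div}_X a_h(\cdot,\cdot,Xu)$ fixed. So neither compensated-compactness result in the paper applies to this pairing, and ``upgrading to a weak--strong product'' is not available by this route. The sentence deferring to \cite{zko} is therefore doing all the work at the crucial step.

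If you want to repair the argument within the paper's toolkit, avoid inserting $\mathcal{A}_h u$ and work instead with the limit momentum: along the subsequence $a_h(\cdot,\cdot,Xu_h)\rightharpoonup M$ in $L^{p'}$ and $u'+\mathrm{div}_X M=f$, so Theorem~\ref{div} \emph{does} apply to the pair $(u_h,a_h(\cdot,\cdot,Xu_h))$ and $(u,M)$, both satisfying an equation with right-hand side $f$. Combining this with the two energy identities and the already-available weak convergence $u_h(t)\rightharpoonup u(t)$ in $H$ (from $u_h\to u$ in $\mathbf{C}^0([0,T];V')$, which follows by Arzel\`a--Ascoli since $V\hookrightarrow H$ is compact) is the line along which the result in \cite{zko} is obtained; the remaining difficulty is passing from $\mathcal{D}'$-convergence to convergence of the truncated integrals $\int_0^t\!\int_\Omega$, which requires an additional equi-integrability argument not contained in your sketch.
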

\begin{theorem}\label{div}
Let $v_h,v,w_h,w \in \mathcal{W}$ satisfy
\begin{align*}
v_h \to v \quad\text{ weakly in }	\mathcal{V} \,, \qquad 
v_h' \to v' \quad\text{ weakly in }	\mathcal{V}' \,,				\\
w_h \to w \quad\text{ weakly in }	\mathcal{V} \,, \qquad 
w_h' \to w' \quad\text{ weakly in }	\mathcal{V}'
\end{align*}
and assume that $(M_h)_h , (N_h)_h \subset L^{p'}(0,T;L^{p'}(\Omega; \R^m))$ weakly converge to $M$ and to $N$ in $L^{p'}(0,T;L^{p'} (\Omega; \R^m))$, respectively.
Suppose that
\begin{align*}
v_h' + \text{\rm div}_X \, M_h &= f \qquad \text{in } \mathcal{V}' \,,			\\	
w_h' + \text{\rm div}_X \, N_h &= g \qquad \text{in } \mathcal{V}'
\end{align*}
for some $f , g \in \mathcal{V}'$. Then,
\[
(M_h - N_h ,X v_h - Xw_h)_{\R^m} \to (M - N, Xv - Xw)_{\R^m}\quad\text{in }\mathcal{D}'(\Omega\times(0,T)) \, .
\]
\end{theorem}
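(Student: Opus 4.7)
\emph{Plan of proof.} My plan is to reduce the statement to a single-pair compensated-compactness identity by linearity and then to imitate the spatial integration-by-parts argument of Theorem \ref{comp_comp}, supplemented by a time integration-by-parts. Setting $y_h := v_h - w_h$, $P_h := M_h - N_h$, $y := v - w$, $P := M - N$, and $\ell := f - g$, the hypotheses immediately yield $y_h \to y$ weakly in $\mathcal{V}$, $y_h' \to y'$ weakly in $\mathcal{V}'$, $P_h \to P$ weakly in $L^{p'}(0,T; L^{p'}(\Omega; \R^m))$, and the single identity
\[
y_h' + \mathrm{div}_X P_h = \ell \qquad \text{in } \mathcal{V}'.
\]
It therefore suffices to show that $(P_h, X y_h)_{\R^m} \to (P, Xy)_{\R^m}$ in $\mathcal{D}'(\Omega \times (0,T))$.

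Fix $\varphi \in \mathbf{C}_c^\infty(\Omega \times (0,T))$. I would first split
\[
\int_0^T\!\!\int_\Omega (P_h, X y_h)_{\R^m}\, \varphi \, dx\, dt = -\langle \mathrm{div}_X P_h, y_h \varphi\rangle_{\mathcal{V}' \times \mathcal{V}} - \int_0^T\!\!\int_\Omega (P_h, X\varphi)_{\R^m}\, y_h\, dx\, dt,
\]
exactly as in Theorem \ref{comp_comp}. An Aubin--Lions-type argument --- available because $V \hookrightarrow\hookrightarrow L^p(\Omega)$ by Theorem \ref{immersion}, $(y_h)_h$ is bounded in $\mathcal{V}$, and $(y_h')_h$ is bounded in $\mathcal{V}'$ --- yields $y_h \to y$ strongly in $L^p((0,T) \times \Omega)$; paired with the weak $L^{p'}$-convergence of $(P_h, X\varphi)_{\R^m}$, this sends the second term to $\int_0^T\!\int_\Omega (P, X\varphi)_{\R^m}\, y\, dx\, dt$. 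For the first term I would invoke the parabolic identity to replace $\mathrm{div}_X P_h$ by $\ell - y_h'$, obtaining
\[
-\langle \mathrm{div}_X P_h, y_h\varphi\rangle_{\mathcal{V}' \times \mathcal{V}} = \langle y_h', y_h\varphi\rangle_{\mathcal{V}' \times \mathcal{V}} - \langle \ell, y_h\varphi\rangle_{\mathcal{V}' \times \mathcal{V}}.
\]

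The crucial identity to establish is
\begin{equation}\label{ibpKey}
\langle y_h', y_h\varphi\rangle_{\mathcal{V}' \times \mathcal{V}} = -\frac{1}{2} \int_0^T\!\!\int_\Omega y_h^2 \, \partial_t\varphi \, dx\, dt,
\end{equation}
which I would derive by observing that $y_h \varphi \in \mathcal{W}$ with generalized time derivative $y_h' \varphi + y_h \partial_t\varphi$, applying the integration-by-parts formula of Proposition \ref{propW} to the pair $(y_h, y_h \varphi)$ on $[0,T]$ (both boundary terms vanish thanks to the compact time-support of $\varphi$), and exploiting the symmetry $\langle y_h' \varphi, y_h\rangle_{V' \times V} = \langle y_h', y_h\varphi\rangle_{V' \times V}$ together with the reduction $\langle y_h \partial_t\varphi, y_h\rangle_{V'\times V} = \int_\Omega y_h^2 \partial_t\varphi\, dx$ (an $H$-pairing). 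The strong convergence $y_h \to y$ in $L^p((0,T) \times \Omega) \hookrightarrow L^2((0,T) \times \Omega)$ (using $p \geq 2$ and the boundedness of $\Omega \times (0,T)$) gives $y_h^2 \to y^2$ in $L^1$, so \eqref{ibpKey} passes to the limit with $y_h$ replaced by $y$; meanwhile $y_h \varphi \to y\varphi$ weakly in $\mathcal{V}$ takes care of $\langle \ell, y_h\varphi\rangle$. Since the weak limits also produce the limit equation $y' + \mathrm{div}_X P = \ell$ in $\mathcal{V}'$ (the map $\mathrm{div}_X \colon L^{p'} \to \mathcal{V}'$ being weakly continuous), the very same chain of identities applied to the limit shows that $\int_0^T\!\int_\Omega (P_h, Xy_h)_{\R^m}\varphi \to \int_0^T\!\int_\Omega (P, Xy)_{\R^m}\varphi$, as claimed.

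I expect the main technical obstacle to be the rigorous justification of \eqref{ibpKey}, namely verifying the membership $y_h \varphi \in \mathcal{W}$ with its expected generalized derivative via the distributional product rule and carefully invoking Proposition \ref{propW}. Once \eqref{ibpKey} is in hand, the remaining passages to the limit are routine weak--strong pairings enabled by the Aubin--Lions-type compactness of $\mathcal{W}$.
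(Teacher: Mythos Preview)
Your proposal is correct and follows essentially the same route as the paper: the paper also tests against $(v_h-w_h)\varphi$, splits via the Leibniz rule for $X$, replaces $\mathrm{div}_X(M_h-N_h)$ using the parabolic identity, integrates by parts in time (invoking Proposition~\ref{propW}) to reduce $\langle (v_h'-w_h'),(v_h-w_h)\varphi\rangle$ to $-\tfrac12\int (v_h-w_h)^2\partial_t\varphi$, and passes to the limit via the compactness of $\mathcal{W}$ in $L^p(0,T;H)$. The only cosmetic differences are that you package $v_h-w_h$ as a single sequence and invoke Aubin--Lions explicitly (yielding $L^p(0,T;L^p)$ compactness), whereas the paper cites Proposition~\ref{propW} and Theorem~\ref{th5.10}; be mindful of the paper's sign convention $\langle\mathrm{div}_X P,\psi\rangle_{V'\times V}=\int_\Omega(P,X\psi)_{\R^m}\,dx$ when writing the splitting.
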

\begin{proof}
The proof can be obtained in a way similar to the analogous one showed in the elliptic case, namely Theorem \ref{comp_comp}.
For reader's convenience we provide in the following the main calculations.
\medskip

Fix $\varphi \in C^{\infty}_c(\Omega \times (0,T))$ and consider the quantity
\[
\langle(v_h' + \text{\rm div}_X M_h)-(w_h' + \text{\rm div}_X N_h),(v_h-w_h)\,\varphi\rangle_{\mathcal{V}'\times\mathcal{V}}\,.
\]
Then
\begin{align*}
    \int_0^T\int_{\Omega} & (M_h-N_h, X w_h - Xw_h)_{\R^m} \,\varphi \,dx dt \\
    &=\langle (v_h' + \text{\rm div}_X M_h)-(w_h' + \text{\rm div}_X N_h), (v_h-w_h)\,\varphi \rangle_{\mathcal{V}'\times\mathcal{V}}\\
    &\quad - \langle (v_h' - w_h'), (v_h-w_h)\,\varphi\rangle_{\mathcal{V}'\times\mathcal{V}} - \int_0^T \int_{\Omega}  (M_h-N_h, X \varphi)_{\R^m}(v_h - w_h)\, dx dt\,. 
\end{align*}
Consider the right hand side terms.
By assumptions,
\begin{align*}
    \langle (v_h' + \text{\rm div}_X M_h) & - (w_h' + \text{\rm div}_X N_h), (v_h-w_h)\,\varphi \rangle_{\mathcal{V}'\times\mathcal{V}}\\
    & = \langle f - g, (v_h-w_h) \,\varphi \rangle_{\mathcal{V}'\times\mathcal{V}} \to \langle f -g , (v-w) \,\varphi \rangle_{\mathcal{V}'\times\mathcal{V}}\,.
\end{align*}
As regards the second term, by Proposition \ref{propW} and Theorem \ref{th5.10}, we get
\begin{align*}
    2\, \langle (v_h' - w_h')&, (v_h-w_h)\, \varphi\rangle_{\mathcal{V}'\times\mathcal{V}}\\
	& = \int_{\Omega} (v_h-w_h)^2 (x,T) \, dx - \int_{\Omega} (v_h-w_h)^2 (x,0) \, dx\\
	& \quad - \int_0^T \!\!\! \int_{\Omega} (v_h-w_h)^2 \varphi' (x,t) \, dx dt\\
	& \to \int_{\Omega} (v - w)^2 (x,T) \, dx - \int_{\Omega} (v - w)^2 (x,0) \, dx\\
	& \quad - \int_0^T \!\!\! \int_{\Omega} (v - w)^2 \varphi' (x,t) \, dx dt\\
	& = 2\, \langle (v' - w'), (v - w) \,\varphi \rangle_{\mathcal{V}'\times\mathcal{V}}\,.
\end{align*}
For the third term one can proceed as in the proof of Theorem \ref{comp_comp} and conclude.
\end{proof}

The proof of the next result is classical and can be obtained following and adapting (since this is for $p = 2$) the analogous one contained in \cite{spagnolo} or, for $p \geq 2$, the proof contained in \cite{svan2}.
\begin{lem}\label{lem6.5}
Denote by $u_{h} (f, \varphi)$ the solution to problem \eqref{P_h}, $h\in\mathbf{N}$.
Then, there exist three continuous operators
\begin{align*}
    \mathcal{B}: \mathcal{V}' \times H &\to \mathcal{W} \,,\\
    \mathcal{K}: \mathcal{V}' \times H &\to \mathcal{V}' \,,\\
    \mathcal{M}: \mathcal{V}'  \times H &\to L^{p'} (0,T ; L^{p'}(\Omega;\R^m))
\end{align*}
such that, up to subsequences,
\begin{align*}
    u_{h} (f,\varphi) \to \mathcal{B} (f,\varphi) \quad &\text{in }L^p(0,T; H)\,,\\
    \mathcal{A}_h u_{h} (f,\varphi) \to \mathcal{K} (f,\varphi) \quad &\text{in }  \mathcal{V}'\,,\\
    a_h (\cdot,\cdot, X u_{h} (f,\varphi))\to \mathcal{M} (f,\varphi)	\quad &\text{weakly in } L^{p'} (0,T ; L^{p'}(\Omega;\R^m))\,.
\end{align*}
Moreover, $\mathcal{B}$ is injective, $\mathcal{B} (\mathcal{V}' \times H)$ is dense in $\mathcal{W}$ and  $\mathcal{B}$, $\mathcal{K}$ and $\mathcal{M}$ satisfy
\begin{align*}
\partial_t \big( \mathcal{B} (f,\varphi) \big) + \mathcal{K} (f,\varphi) = f	\quad  &\textit{in } \mathcal{V}' \,,\\
\mathcal{K} (f,\varphi) = \text{\rm div}_X \mathcal{M} (f,\varphi) \quad  &\textit{in } \mathcal{V}' \,,\\
\mathcal{B} (f,\varphi)(0) = \varphi \quad&\textit{in }H
\end{align*}
for every $f \in \mathcal{V}'$ and $\varphi \in H$.
\end{lem}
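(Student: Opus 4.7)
The plan is to mimic the elliptic proof of Lemma \ref{lem6.5_ell}, replacing the $L^p(\Omega)$-compactness and the elliptic compensated compactness Theorem \ref{comp_comp} by, respectively, the strong $\mathbf{C}^0([0,T];H)$-convergence of Theorem \ref{th5.10} and the parabolic compensated compactness Theorem \ref{div}. By Theorem \ref{existence}, the sequence $(u_h(f,\varphi))_h$ is uniformly bounded in $\mathcal{W}$ by a quantity depending only on $\alpha,\beta,p,f,\varphi$. The compact embedding $\mathcal{W}\hookrightarrow L^p(0,T;H)$ of Proposition \ref{propW} then provides, after extraction, weak convergence in $\mathcal{V}$ with derivatives weakly convergent in $\mathcal{V}'$, and strong convergence in $L^p(0,T;H)$. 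Growth condition (iii) of Definition \ref{M,alpha,beta} makes $(a_h(\cdot,\cdot,Xu_h(f,\varphi)))_h$ uniformly bounded in $L^{p'}(0,T;L^{p'}(\Omega;\mathbf{R}^m))$, so a further subsequence converges weakly there. I would then define $\mathcal{B}(f,\varphi)$ and $\mathcal{M}(f,\varphi)$ as these respective limits and $\mathcal{K}(f,\varphi):=\mathrm{div}_X\mathcal{M}(f,\varphi)$. Passing to the limit in $u_h'+\mathcal{A}_h u_h=f$ tested against arbitrary $\phi\in\mathcal{V}$ yields $\partial_t\mathcal{B}(f,\varphi)+\mathcal{K}(f,\varphi)=f$ in $\mathcal{V}'$, and Theorem \ref{th5.10} upgrades $L^p(0,T;H)$-convergence to $\mathbf{C}^0([0,T];H)$-convergence, hence $\mathcal{B}(f,\varphi)(0)=\varphi$.

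\textbf{Injectivity and density.} For injectivity, assume $\mathcal{B}(f_1,\varphi_1)=\mathcal{B}(f_2,\varphi_2)$; evaluating at $t=0$ via Theorem \ref{th5.10} forces $\varphi_1=\varphi_2$. Setting $v_h:=u_h(f_1,\varphi_1)$, $w_h:=u_h(f_2,\varphi_2)$, $M_h:=a_h(\cdot,\cdot,Xv_h)$ and $N_h:=a_h(\cdot,\cdot,Xw_h)$, the hypotheses of Theorem \ref{div} are satisfied; combining its conclusion with monotonicity (Definition \ref{M,alpha,beta} (ii)) and with the refined growth (iii)' of Remark \ref{chebelfreschetto}, and applying Lemma \ref{chiado'} to the pointwise monotonicity/growth inequalities, yields a.e.\ on $\Omega\times(0,T)$
\[
|\mathcal{M}(f_1,\varphi_1)-\mathcal{M}(f_2,\varphi_2)|\le \beta'\bigl(1+(\mathcal{M}(f_1,\varphi_1),X\mathcal{B}(f_1,\varphi_1))_{\mathbf{R}^m}+(\mathcal{M}(f_2,\varphi_2),X\mathcal{B}(f_2,\varphi_2))_{\mathbf{R}^m}\bigr)^{\frac{p-2}{p-1}}|X\mathcal{B}(f_1,\varphi_1)-X\mathcal{B}(f_2,\varphi_2)|^{\frac{1}{p-1}}.
\]
The right-hand side vanishes under our assumption, so $\mathcal{M}(f_1,\varphi_1)=\mathcal{M}(f_2,\varphi_2)$, hence $\mathcal{K}(f_1,\varphi_1)=\mathcal{K}(f_2,\varphi_2)$, and the identified limit equation then gives $f_1=f_2$. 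Continuity of the three operators is a straightforward consequence of the stability of the extraction together with the uniform bounds and Remark \ref{ultimanota}. For density of $\mathcal{B}(\mathcal{V}'\times H)$ in $\mathcal{W}$, I would adapt the Hahn--Banach-type argument used at the end of the proof of Lemma \ref{lem6.5_ell}: if a continuous linear functional on $\mathcal{W}$ annihilates $\mathcal{B}(\mathcal{V}'\times H)$, test it on a suitable $\mathcal{B}(f_0,\varphi_0)$ associated to the functional itself, and use the coercivity from (ii) and the generalized integration-by-parts formula of Proposition \ref{propW} to conclude that the functional must vanish.

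\textbf{Main obstacle.} The heart of the argument is the injectivity step. The difficulty is twofold: one must ensure that Theorem \ref{div} produces a genuine a.e.\ pointwise identification of the limit flux difference (and not merely a distributional one), and that the time-boundary contributions arising from integration by parts in its proof do not leave uncontrolled residues when transferred to Lemma \ref{chiado'}. This is precisely where the interplay between the parabolic compensated compactness, the monotonicity (ii), and the refined growth (iii)' becomes essential; it is also where the argument genuinely departs from the linear elliptic case of \cite{MPSC2}, in which the absence of time derivatives and the linearity of the structure allow a considerably simpler reasoning.
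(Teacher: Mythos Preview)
The paper does not give its own proof of this lemma; it simply declares the argument classical and refers to \cite{spagnolo} (for $p=2$) and \cite{svan2} (for $p\ge 2$). Your sketch is exactly that classical template---uniform $\mathcal{W}$-bounds from Theorem \ref{existence}, compactness from Proposition \ref{propW}, weak $L^{p'}$-compactness of the fluxes from the growth condition, identification of the initial datum via Theorem \ref{th5.10}, and the pointwise estimate on $\mathcal{M}$ via Theorem \ref{div} together with Lemma \ref{chiado'}---so the main line is correct and coincides with what the paper has in mind.

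One point deserves more care than you indicate: the density of $\mathcal{B}(\mathcal{V}'\times H)$ in $\mathcal{W}$. Your proposed adaptation of the elliptic Hahn--Banach argument does not transfer verbatim, because a continuous linear functional on $\mathcal{W}$ is not naturally an element of $\mathcal{V}'\times H$; the phrase ``test it on a suitable $\mathcal{B}(f_0,\varphi_0)$ associated to the functional itself'' therefore has no direct meaning here, unlike in Lemma \ref{lem6.5_ell} where $g_o\in V'$ can be fed straight back into $B$. What does go through by the elliptic argument is density in $\mathcal{V}$: if $g_o\in\mathcal{V}'$ annihilates the range, test on $\mathcal{B}(g_o,0)$ and use coercivity together with Proposition \ref{propW} (the boundary term $\tfrac12\|u_h(T)\|_H^2$ is nonnegative) to get $\mathcal{B}(g_o,0)=0$, hence $g_o=0$ by injectivity. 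Density in $\mathcal{W}$ is then typically obtained a posteriori, after the limit function $a$ is shown to lie in $\tilde{\mathcal{M}}_{\Omega\times(0,T)}(\alpha,\beta',p)$: Theorem \ref{existence} applied to the limit operator makes $\mathcal{B}$ a bijection onto $\mathcal{W}$, which is stronger than density. This is not a flaw in your overall strategy, but the order of the argument and the mechanism for density are different from what your last paragraph suggests.
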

\begin{theorem}\label{MainTh}
Consider a sequence $(a_h)_h\subset\mathcal{M}_{\Omega\times(0,T)}(\alpha,\beta,p)$ and the related sequence of parabolic operators $(\mathcal{P}_{h})_{h}$, defined in \eqref{operatori}.

\noindent Then, there exists $a \in \tilde{\mathcal{M}}_{\Omega\times(0,T)}(\alpha, \beta', p)$, with $\beta' = \left( \alpha^{-1} \beta^p \right)^{\frac{1}{p-1}}  \max \{ 1 , \alpha^{-\frac{p-2}{p-1}} \}$,
such that, up to subsequences,
\[
\mathcal{P}_{h}\quad \g\text{-converges to } \mathcal{P}\,,
\]
where $\mathcal{P}:\mathcal{W}\to\mathcal{V}'$ is the operator in $X$-divergence form associated to $a$ and defined by
\[
\mathcal{P}(u)=u'+\mathrm{div}_X (a (\cdot,\cdot, Xu ))\quad\text{for every }u\in\mathcal{W}\,.
\]
\end{theorem}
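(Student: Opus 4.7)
The plan is to mirror the three-step proof of the elliptic Theorem \ref{MainTh_1}, replacing the elliptic compensated compactness (Theorem \ref{comp_comp}) with its parabolic counterpart (Theorem \ref{div}) and handling the time derivatives through the integration-by-parts formula of Proposition \ref{propW}.

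First I would invoke Lemma \ref{lem6.5} to obtain the operators $\mathcal{B}$, $\mathcal{K}$, $\mathcal{M}$, and define the candidate limit operator on the dense subset $\mathcal{B}(\mathcal{V}' \times H) \subset \mathcal{W}$ by setting $\mathcal{A}(u) := f - u'$, where $(f, u(0)) = \mathcal{B}^{-1}(u)$ (well posed since $\mathcal{B}$ is injective and $\mathcal{B}(f,\varphi)(0) = \varphi$); then $\mathcal{A}(u) = \mathcal{K}(f, u(0)) = \mathrm{div}_X \mathcal{M}(f, u(0))$. Setting $\mathcal{N}(u) := \mathcal{M}(\mathcal{B}^{-1}(u))$ gives $\mathcal{A}(u) = \mathrm{div}_X \mathcal{N}(u)$. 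Applying Theorem \ref{div} to pairs of solution sequences $u_h(f, \varphi)$ and $u_h(g, \psi)$, and passing to the limit in the monotonicity and growth conditions of Definition \ref{M,alpha,beta} for $a_h$ with the aid of Lemma \ref{chiado'}, would yield the structural inequalities
\begin{align*}
\alpha |Xv - Xw|^p & \leq (\mathcal{N}(v) - \mathcal{N}(w), Xv - Xw)_{\R^m}, \\
|\mathcal{N}(v) - \mathcal{N}(w)| & \leq \beta' \left( 1 + (\mathcal{N}(v), Xv)_{\R^m} + (\mathcal{N}(w), Xw)_{\R^m} \right)^{\frac{p-2}{p-1}} |Xv - Xw|^{\frac{1}{p-1}}
\end{align*}
a.e.\ in $\Omega \times (0, T)$, for every $v, w \in \mathcal{B}(\mathcal{V}' \times H)$, as the parabolic analogues of \eqref{pastaconlabottarga} and \eqref{pasta2}. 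The second estimate provides a H\"older-type continuity which allows the extension of $\mathcal{A}$ and $\mathcal{N}$ by density to all of $\mathcal{W}$.

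Next I would identify $\mathcal{N}$ as coming from a Carath\'eodory function $a$, following Step 2 of Theorem \ref{MainTh_1}. Time-independent maps $w_\xi(x) = (\xi, x)_{\R^n} \phi(x)$ belong to $\mathcal{W}$ with $w_\xi' = 0$ and $Xw_\xi(x) = C(x)\xi$ on an open set $\omega$ with $\overline{\omega} \subset \Omega$. The growth inequality forces $\mathcal{N}(w_\xi) = \mathcal{N}(w_{\tilde\xi})$ on $\omega \times (0, T)$ whenever $\xi - \tilde\xi \in \ker C(x)$, so condition (LIC) allows setting $\tilde a(x, t, C(x)\xi) := \mathcal{N}(w_\xi)(x, t)$ and extending to all of $\R^m$. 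Exhausting $\Omega$ by an increasing sequence of open sets $\omega_j$ with cut-offs $\phi_j$ produces a globally defined Carath\'eodory function $a: \Omega \times (0, T) \times \R^m \to \R^m$. I would then verify $a \in \tilde{\mathcal{M}}_{\Omega \times (0, T)}(\alpha, \beta', p)$: conditions $(ii)$ and $(iii)'$ come directly from the two structural inequalities, while $a(x, t, 0) = 0$ follows by taking $(f, \varphi) = (0, 0)$, whose corresponding solutions $u_h \equiv 0$ give $\mathcal{M}(0, 0) = 0$, and then specializing to $\xi = 0$.

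The main obstacle lies in Step 2: while Theorem \ref{div} reliably upgrades the elliptic compensated compactness to the parabolic setting via the initial-final trace terms appearing in Proposition \ref{propW}, the delicate point is proving the pointwise identification $\mathcal{N}(u)(x, t) = a(x, t, Xu(x, t))$ uniformly in $t$ and for every $u \in \mathcal{W}$, not only for the affine-type test functions $w_\xi$. This has to be carried out without any available notion of $X$-affine function, relying entirely on the Euclidean affine $(\xi, x)_{\R^n}\phi(x)$ together with condition (LIC) to rule out any residual dependence of $\mathcal{N}(u)$ on $u$ beyond its $X$-gradient, exactly as in the elliptic case but now a.e.\ in the full space-time cylinder.
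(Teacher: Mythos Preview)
Your proposal is correct and follows essentially the same route as the paper: invoke Lemma~\ref{lem6.5}, define $\mathcal{A}$ and $\mathcal{N}=\mathcal{M}\circ\mathcal{B}^{-1}$ on the dense range $\mathcal{B}(\mathcal{V}'\times H)$, derive the pointwise structural inequalities via Theorem~\ref{div} and Lemma~\ref{chiado'}, extend by density, and then build $a$ from Euclidean-affine test functions using condition (LIC), exactly as in Steps~2--3 of Theorem~\ref{MainTh_1}. The only cosmetic difference is that the paper cuts off in space \emph{and} time, taking $w_\xi^j(x,t)=(\xi,x)_{\R^n}\Phi_j(x,t)$ with $\Phi_j\in\mathbf{C}^1_0(\Omega\times(0,T))$ identically~$1$ on $\omega_j\times I_j$, whereas you use time-independent $w_\xi(x)=(\xi,x)_{\R^n}\phi(x)$; both lie in $\mathcal{W}$ and have $Xw_\xi=C(x)\xi$ on the relevant set, so this does not affect the argument.
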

\begin{proof}
By Lemma \ref{lem6.5}, $\mathcal{B}$ is injective. Therefore, we can define the operator
\begin{align*}
\mathcal{A} : \mathcal{B} (\mathcal{V}' \times H) \to \mathcal{V}' , \qquad 
	\mathcal{A} \big( \mathcal{B} (f, \varphi) \big) := \mathcal{K} (f, \varphi)\,,
\end{align*}
which satisfies
\[
\alpha\| v - w \|_{\mathcal{V}}^p	\leq \big\langle \mathcal{A} v -  \mathcal{A} w \, ,  v - w \big\rangle_{\mathcal{V}' \times \mathcal{V}}
\]
and, by similar arguments of Remark \ref{chebelfreschetto}, also
\begin{align*}
\| \mathcal{A} v - \mathcal{A} w \|_{\mathcal{V}'} \leq \tilde\beta
	\Big( |\Omega| + \| v \|_{\mathcal{V}}^{p} + \| w \|_{\mathcal{V}}^{p} \Big)^{\frac{p-2}{p-1}} \| v - w \|_{\mathcal{V}}^{\frac{1}{p-1}}
\end{align*}
for every $v, w \in \mathcal{B} (\mathcal{V}' \times H)$, where $\tilde\beta = \big( \beta \, \alpha^{-\frac{1}{p}} \big)^\frac{1}{p-1}$.

\noindent Since $\mathcal{B} (\mathcal{V}' \times H)$ is dense in $\mathcal{W}$, and then also in $\mathcal{V}$, by the previous estimates $\mathcal{A}$ can be extended to another operator, still denoted by $\mathcal{A}$,  $\mathcal{A} : \mathcal{V} \to \mathcal{V}'$, satisfying the above estimates and, by Lemma \ref{lem6.5}, also
\begin{align*}
\big(\mathcal{B} (f,\varphi) \big)' + \mathcal{A} \big( \mathcal{B} (f,\varphi) \big) = f \quad  &\textit{ in } \mathcal{V}' \, ,		\\
\mathcal{A} \big( \mathcal{B} (f,\varphi) \big) = \text{\rm div}_X \mathcal{M} (f,\varphi) \quad  &\textit{ in } \mathcal{V}' \, .
\end{align*}
Denote
\[
\mathcal{P} u := u' + \mathcal{A} u , \qquad u \in \mathcal{W}\,,
\]
\[
\begin{array}{cccc}
\tilde{\mathcal{P}} :	&	\mathcal{W}	&	\to		&	\mathcal{V}' \times H				\\
	\			&	u			&	\mapsto	&	\mathcal{B}^{-1} u
\end{array}
\]
and, more specifically,
\[
\mathcal{B}^{-1} u = \big( \mathcal{P} u , u (0) \big)\,,
\]
and consider the composition
$\mathcal{N} := \mathcal{M} \circ {\tilde{\mathcal{P}}} : \mathcal{W}\to L^{p'} (0,T ; L^{p'}(\Omega;\R^m))$.
Let us show that
\begin{equation}\label{classM2}
    (\mathcal{N} u - \mathcal{N} v, Xu - Xv)_{\R^m} \geq \alpha |Xu - Xv|^p \, ,
\end{equation}
and
\begin{align}\label{due}
    |\mathcal{N} u-\mathcal{N} v| \leq 
    \beta' \left[1+ (\mathcal{N} u,Xu)_{\R^m}+(\mathcal{N} v,Xv)_{\R^m}\right]^\frac{p-2}{p-1}|Xu-Xv|^\frac{1}{p-1}
\end{align}
a.e. in $\Omega\times(0,T)$ for any $u,v \in \mathcal{W}$, where $\beta' = {\displaystyle \left( \alpha^{-1}\,\beta^p \right)^{\frac{1}{p-1}}  \max \{ 1 , \alpha^{-\frac{p-2}{p-1}} \}}$.

\noindent Fix $u,v\in\mathcal{W}$, let $(f,\varphi),(g,\psi)\in\mathcal{V}'\times H$ satisfy
\[
u(x,t) = \mathcal{B} (f,\varphi)\quad\text{and}\quad v(x,t) = \mathcal{B} (g,\psi)
\]
for any $(x,t)\in\Omega\times(0,T)$ and denote
\begin{align*}
u_{h} &:= \mathcal{P}_{h}^{-1}(\tilde{\mathcal{P}} u)\,,\\
v_{h} &:= \mathcal{P}_{h}^{-1}(\tilde{\mathcal{P}} v)\,.
\end{align*}
By Lemma \ref{lem6.5},
\begin{equation*}
\begin{split}
    a_h(\cdot,\cdot,Xu_{h} (f,\varphi))&\to \mathcal{M} (f,\varphi) = \mathcal{M} (\tilde{\mathcal{P}} u)=\mathcal{N} u \, ,		\\
    a_h(\cdot,\cdot,Xv_{h} (g,\psi))&\to\mathcal{M} (g,\psi) = \mathcal{M} (\tilde{\mathcal{P}} v)=\mathcal{N} v
\end{split}
\end{equation*}
weakly in $L^{p'} (0,T ; L^{p'}(\Omega;\R^m))$ and,
since, by Definition \ref{M,alpha,beta} $(ii)$,
\begin{equation*}
\begin{split}
(a_h(x,t,Xu_{h})-a_h(x,t,Xv_{h})&,Xu_{h}-Xv_{h})_{\R^m} \geq \alpha |Xu_{h} - Xv_{h}|^p\quad h\in\mathbf{N}\,,
\end{split}
\end{equation*}
then, by passing to the limit, we get \eqref{classM2} in virtue of Lemma \ref{chiado'} and Theorem \ref{div}.
Moreover, by Definition \ref{M,alpha,beta} (ii) and (iii), it also holds that
\begin{align*}
    &|a_h(x,t,Xu_{h}) - a_h(x,t,Xv_{h})| \\
    &\leq \beta \left[ 1 + \alpha^{-1} (a_h(x,t,Xu_{h}),Xu_{h})_{\R^m} + \alpha^{-1} (a_h(x,t,Xv_{h}),Xv_{h})_{\R^m}\right]^{\frac{p-2}{p}} \\
    &\quad \times \alpha^{-\frac{1}{p}} (a_h(x,t,Xu_{h})-a_h(x,t,Xv_{h}), Xu_{h}-Xv_{h})_{\R^m}^{\frac{1}{p}}\quad h\in\mathbf{N}\,.
\end{align*}
Then, by Lemma \ref{chiado'}, Theorem \ref{div} and Lemma \ref{lem6.5} and, by passing to the limit, we get
\begin{align*}
    &|\mathcal{M} (f,\varphi) - \mathcal{M} (g,\psi)|^{\frac{p-1}{p}}\\
    &\leq\beta\alpha^{-\frac{1}{p}} ( 1 + \alpha^{-1} \big((\mathcal{M} (f,\varphi),X\mathcal{B} (f,\varphi))_{\R^m} + (\mathcal{M} (g,\psi),X\mathcal{B} (g,\psi))_{\R^m}\big))^{\frac{p-2}{p}}\\
    & \quad \times |X\mathcal{B} (f,\varphi)-X\mathcal{B} (g,\psi)|^{\frac{1}{p}}
\end{align*}
and \eqref{due} follows too.


We conclude showing that $\mathcal{A}$ is an operator in $X$-divergence form, that is, by constructing a limit function $a(x,t,\cdot) : \R^m \to \R^m$ satisfying
\begin{align*}
    \mathcal{N} u = a (x,t, Xu)\quad\text{for any }u\in \mathcal{W}\text{ a.e. }(x,t)\in\Omega\times(0,T)
\end{align*}
and, finally, by showing that $a \in \tilde{\mathcal{M}}_{\Omega\times(0,T)}(\alpha, \beta', p)$.


\noindent Fix sequences of open sets $\omega_j$ and $I_j$ such that $\overline{\omega_j} \subset \Omega$ and $\overline{I_j}\subset(0,T)$ for every $j\in\mathbf{N}$ and satisfying $\cup_{j\in\mathbf{N}}\omega_j\times I_j=\Omega\times(0,T)$, and a sequence of cut-off functions $\Phi_j \in {\bf C}^1_0 (\Omega\times(0,T))$ such that $\Phi_j(x,t) \equiv 1$ in $\omega_j\times I_j$ for any $j\in\mathbf{N}$. Then, for any fixed $\xi\in\mathbf{R}^n$, denoted by $\eta=\eta(x)$ the vector such that $C(x) \xi = \eta\in\mathbf{R}^m$, we define
\begin{align*}
    a (x,t,\eta (x)):=\lim_{j \to +\infty} \mathcal{N} w_{\xi}^j (x)\quad \text{for any } x\in \Omega \setminus Z_X\,,
\end{align*}
where, set $(\xi, x)_{\mathbf{R}^n} = \xi_1 x_1 + \ldots + \xi_n x_n$, the map $w_{\xi}^j$ is defined by
\begin{align*}
    w_{\xi}^j (x,t):= \big( \xi, x \big)_{\mathbf{R}^n} \Phi_j  (x,t) \quad \text{for any } (x,t)\in\omega_j\times I_j\,.
\end{align*}
Notice that $a$ is well-defined by condition (LIC) on the $X$-gradient, which ensures that $(X_1(x),\dots,X_m(x))$ are linearly independent outside $Z_X$.

\noindent The last part of the proof follows verbatim as in Step 2 and Step 3 of Theorem \ref{MainTh_1}, where one uses \eqref{classM2} and \eqref{due}, instead of \eqref{pastaconlabottarga} and \eqref{pasta2}, and Lemma \ref{lem6.5}, instead of Lemma \ref{lem6.5_ell}. 
\end{proof}
\begin{lem}\label{lemmafinale}
Consider a sequence $(a_h)_h \subset \mathcal{M}_{\Omega} (\alpha,\beta,p)$. If
\begin{align*}
\text{\rm div}_X (a_h (x, X)) \quad &\text{$\g$-converges to} \quad \text{\rm div}_X (a (x, X)) \text{ and }\\
\partial_t + \text{\rm div}_X (a_h (x, X)) \quad &\text{$\g$-converges to} \quad \partial_t + \text{\rm div}_X (b (x, t, X))
\end{align*}
then,
\[
a = b .
\]
\end{lem}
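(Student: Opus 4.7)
The plan is to use stationary (time-independent) solutions of the elliptic problem as solutions of the parabolic one, to exploit uniqueness of weak limits to identify $a$ and $b$ on the image of the $X$-gradient of an arbitrary function, and then to apply the test-function construction from Theorem \ref{MainTh_1} to upgrade this to an equality of Carath\'eodory functions.

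First I would fix $w \in V$ arbitrary, set $g := \mathrm{div}_X(a(\cdot, Xw)) \in V'$ so that $w$ is the unique solution of the limit elliptic problem $Au = g$, and let $u_h \in V$ solve $A_h u_h = g$; by the assumed elliptic $\g$-convergence, $u_h \to w$ strongly in $L^p(\Omega)$ and $a_h(\cdot, Xu_h) \to a(\cdot, Xw)$ weakly in $L^{p'}(\Omega;\mathbf{R}^m)$. The key observation is that, because $a_h$ is independent of $t$, the constant-in-time function $(x,t)\mapsto u_h(x)$ lies in $\mathcal{W}$, has vanishing generalized derivative in $\mathcal{V}'$, and solves $\mathcal{P}_h u = g$ with initial datum $\varphi_h := u_h \in H$ (note that $p\geq 2$ with $\Omega$ bounded also gives $u_h \to w$ strongly in $H$). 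Applying Remark \ref{ultimanota} in its parabolic version, with constant source $g \in \mathcal{V}'$ and $\varphi_h \to w$ in $H$, the assumed parabolic $\g$-convergence produces a limit $v \in \mathcal{W}$ that solves the limit parabolic problem with data $(g,w)$ and for which
\[
u_h \to v \text{ strongly in } L^p(0,T;H),\quad a_h(\cdot,\cdot,Xu_h) \to b(\cdot,\cdot,Xv) \text{ weakly in } L^{p'}(0,T;L^{p'}(\Omega;\mathbf{R}^m)).
\]
Since each $u_h$ is stationary, so is $v$, hence $v(x,t) = w(x)$.

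I would then compare the two weak limits of the $t$-independent sequence $(a_h(\cdot, Xu_h))_h$: viewed in $L^{p'}(\Omega\times(0,T);\mathbf{R}^m)$, a Fubini computation against test fields in $L^p(\Omega\times(0,T);\mathbf{R}^m)$, together with the elliptic weak convergence, identifies the weak limit as the constant-in-$t$ field $(x,t)\mapsto a(x,Xw(x))$, while the parabolic convergence above identifies it as $b(x,t,Xw(x))$. Uniqueness of weak limits therefore gives $a(x,Xw(x)) = b(x,t,Xw(x))$ for a.e.\ $(x,t)\in\Omega\times(0,T)$. To conclude I would reproduce the construction of Step 2 in the proof of Theorem \ref{MainTh_1}: pick exhausting open sets $\omega_j \Subset \Omega$ with $\bigcup_j\omega_j = \Omega$ and cutoffs $\phi_j \in \mathbf{C}^1_0(\Omega)$ with $\phi_j \equiv 1$ on $\omega_j$, and apply the identity above to $w = w_\xi^j(x) := (\xi,x)_{\mathbf{R}^n}\phi_j(x)$ for every $\xi \in \mathbf{Q}^n$. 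Since $Xw_\xi^j(x) = C(x)\xi$ on $\omega_j$ and condition (LIC) makes $C(x):\mathbf{R}^n\to\mathbf{R}^m$ surjective for a.e.\ $x$, the vectors $C(x)\xi$ ($\xi\in\mathbf{Q}^n$) form a dense subset of $\mathbf{R}^m$ at a.e.\ $x$; Carath\'eodory continuity of $a(x,\cdot)$ and $b(x,t,\cdot)$ then promotes the identity to $a(x,\eta) = b(x,t,\eta)$ for a.e.\ $(x,t)\in\Omega\times(0,T)$ and every $\eta\in\mathbf{R}^m$, i.e.\ $a = b$.

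The main obstacle I anticipate is the bookkeeping in the second step: I need Remark \ref{ultimanota} to apply with $h$-dependent initial data $\varphi_h = u_h \to w$, and the Fubini comparison between the elliptic weak limit in $L^{p'}(\Omega;\mathbf{R}^m)$ and the parabolic weak limit in $L^{p'}(0,T;L^{p'}(\Omega;\mathbf{R}^m))$ must be carried out carefully, so that uniqueness of weak limits in the parabolic Lebesgue space genuinely delivers the pointwise identification $a(\cdot,Xw) = b(\cdot,\cdot,Xw)$ before the test-function construction is invoked.
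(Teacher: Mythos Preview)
Your proposal is correct and follows essentially the same approach as the paper: both arguments view the stationary elliptic solutions $u_h$ as parabolic solutions with source $g$ constant in time and initial data $\varphi_h = u_h$, invoke Remark \ref{ultimanota} to pass to the limit with varying initial data, and identify the two weak limits of the momenta. The only noticeable difference is in the final identification: the paper first observes that $b$ is $t$-independent, deduces $A^{-1}g = B^{-1}g$ for every $g \in V'$ (hence $A=B$), and then appeals tersely to ``convergence of momenta'' to conclude $a=b$, whereas you carry out the last step explicitly by feeding the test functions $w_\xi^j$ from Step~2 of Theorem~\ref{MainTh_1} into the identity $a(\cdot,Xw)=b(\cdot,\cdot,Xw)$ and using (LIC) together with Carath\'eodory continuity---this is precisely the argument hidden behind the paper's one-line conclusion, so your version is in fact more complete at that point.
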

\begin{proof}
Fix $g \in V'$ and denote by $u_h (g)\in V$ the solution to problem
\[
\text{\rm div}_X (a_h (x, Xw)) = g\quad\text{in }V'\,.
\]
Then, by hypotheses,
\begin{align*}
    u_h (g)\to u(g)\quad &\text{strongly in }L^p(\Omega)\,,\\
    a_h(\cdot,Xu_h(g))\to a(\cdot, Xu(g))\quad&\text{weakly in }L^{p'}(\Omega ; \R^m)\,.
\end{align*}
In a similar way, for any fixed $f \in \mathcal{V}'$ and $\varphi \in H$, denote by $v_h (f,\varphi)\in\mathcal{W}$ the solution to problem
\begin{equation}\label{burano}
\begin{cases}
    w' + \text{\rm div}_X (a_h (x, Xw)) = f&\text{ in }\mathcal{V}'\\
    w(0) = \varphi&\text{ in }H			
\end{cases}.
\end{equation}
Let $f(x,t) := g(x)$ for any $t\in [0,T]$ and define $\varphi_h := u_h (g)$ for any $h\in\mathbf{N}$.
Since, by Remark \ref{ultimanota} $(\varphi_h)_h$ strongly converges to $\varphi:=u (g)$ in $H$, then
\begin{align}\label{pippo2}
    v_h (g, u_h (g))\to v (g, u(g))\quad &\text{strongly in }	L^p (0,T; L^2 (\Omega))\,,\\
    \label{pippo}
    a_h (\cdot, X v_h (g, u_h (g)))\to b(\cdot, \cdot, X v (g, u (g)))\quad&\text{weakly in }L^{p'} (0,T; L^{p'}(\Omega ; \R^m))\,.
\end{align}
Moreover, since $u_h(g)$ is also a solution to problem \eqref{burano}, with fixed data $f(x,t) := g(x)$ for any $t\in [0,T]$ and $\varphi_h := u_h (g)$ and, since $v_h (0) = u_h (g)$, then, by the uniqueness of the solution to problem \eqref{burano}, it holds that
\[
v_h (x, t) = u_h (x) \quad \text{for any } t \in [0,T]
\]
and, by \eqref{pippo2}, that  $v (g, u(0) ) = u (g)$, that is, $v$ turns out to be independent of $t$.
By \eqref{pippo}, since the left hand side term is independent of $t$, we have that
$$
b (x, t, \xi) = b (x, \xi) \, .
$$
Therefore,
\begin{align*}
    \text{\rm div}_X (a (x, X u (g) )) = g \quad&\text{in } V'\,,\\
    \text{\rm div}_X (b (x, t,X v (g, u(0) )  )) = \text{\rm div}_X (b (x, X u (g))) = g \quad&\text{in } V'\,,
\end{align*}
that is,
\[
A^{-1} g = B^{-1} g \qquad \text{ for every } g \in V',
\]
where $A:V\to V'$ and $B:V\to V'$ are, respectively, defined by
\[
A w := \text{\rm div}_X (a (x, X w )) \quad \text{and} \quad B w := \text{\rm div}_X (b (x, X w ))\,,\quad w\in V\,.
\]
Then, $A = B$ and, by the convergence of momenta, we finally get $a = b$.
\end{proof}


\end{document}